\title{On the group of automorphisms \\
of Horikawa surfaces}
\author{Vicente Lorenzo}
\date{}
\newtheorem{theorem}{Theorem}
\newtheorem{lemma}{Lemma}
\newtheorem{proposition}{Proposition}
\newtheorem{corollary}{Corollary}
\theoremstyle{remark}\newtheorem{remark}{Remark}
\theoremstyle{remark}
\renewenvironment{proof}[1][\proofname]{%
  \par\pushQED{\qed}\normalfont%
  \topsep6\p@\@plus6\p@\relax
  \trivlist\item[\hskip\labelsep\bfseries#1\@addpunct{.}]%
  \ignorespaces
}{%
  \popQED\endtrivlist\@endpefalse
}
\newenvironment{acknowledgements}{\textit{Acknowledgements.}}{}
\newcommand\blfootnote[1]{%
  \begingroup
  \renewcommand\thefootnote{}\footnote{#1}%
  \addtocounter{footnote}{-1}%
  \endgroup
}
\begin{document}

\maketitle

\begin{abstract}
Minimal algebraic surfaces of general type 
 $X$
  such that $K^2_X=2\chi(\mathcal{O}_X)-6$ are called Horikawa surfaces.
 In this note the group of automorphisms of Horikawa surfaces is studied. The main result states that given an admissible pair $(K^2, \chi)$ such that $K^2=2\chi-6$, every irreducible component of Gieseker's moduli space    
  $\mathfrak{M}_{K^2,\chi}$ contains
an open subset consisting of
surfaces with group of automorphisms isomorphic to 
 $\mathbb{Z}_2$.
\blfootnote{\textbf{Mathematics Subject Classification (2010):} MSC 14J29}
\blfootnote{\textbf{Keywords:} Horikawa surfaces $\cdot$ Group of automorphisms $\cdot$ Moduli spaces  $\cdot$ Surfaces of general type}
\end{abstract}

\section{Introduction.}\label{Intro}

We work over the field $\mathbb{C}$ of complex numbers. The main numerical invariants of an algebraic surface $X$ are the self-intersection of its canonical class $K^2_X$ and its holomorphic Euler characteristic $\chi(\mathcal{O}_X)$. 
 If $X$ is minimal and of general type, the following inequalities are well known to be satisfied (cf. \cite[Chapter VII]{Barth2004}):
\begin{equation}\label{GeneralType}
 \chi(\mathcal{O}_X)\geq 1,\quad K_X^2\geq 1, \quad 2\chi(\mathcal{O}_X)-6\leq K_X^2\leq 9\chi(\mathcal{O}_X).
\end{equation}
Minimal algebraic surfaces of general type $X$   such that $K^2_X=2\chi(\mathcal{O}_X)-6$ were already studied by Enriques \cite{EnriquesPaper}, \cite[Section VIII.11]{EnriquesBook} but they are often called Horikawa surfaces because of 
Horikawa's contribution to their deformation theory \cite{Hor1}. One property of Horikawa surfaces is that their 
canonical system is base-point-free and induces a morphism  whose image is a surface. Moreover, this morphism has degree 2 (see Theorem \ref{StructureEvenHorikawa}). 
As a consequence, every Horikawa surface $X$ has a $\mathbb{Z}_2$-action generated by the involution that 
 sends a general point of $X$ to the point with the same image via the canonical map of $X$. In particular, the group of automorphisms $\textrm{Aut}(X)$ of $X$ has a subgroup isomorphic to $\mathbb{Z}_2$ and one may wonder whether this subgroup is proper or not when $X$ is sufficiently general.
 
 It is known that under some ampleness and generality assumptions, the group of automorphisms of a surface that can be realized as an abelian $G$-cover is precisely the group $G$ (see \cite{FanPar} or \cite{Manetti1997}). 
 Since Horikawa surfaces can be realized as $\mathbb{Z}_2$-covers via their canonical map, one could expect the group of automorphisms of a general Horikawa surface to be $\mathbb{Z}_2$. Nevertheless, Horikawa surfaces do not satisfy the hypothesis of 
 \cite[Theorem 4.6]{FanPar} nor \cite[Corollary B]{Manetti1997}
 and, to the best of the author's knowledge, there is no reference allowing to prove this in a straightforward way.
 
 Given an admissible pair $(K^2, \chi)$, i.e. a pair of integers satisfying the inequalities (\ref{GeneralType}), let us denote by $\mathfrak{M}_{K^2,\chi}$ Gieseker’s moduli space of canonical
models of surfaces of general type
with fixed self-intersection of the canonical class $K^2$ and fixed holomorphic Euler characteristic $\chi$. The aim of this note is to prove the following:
 
 \begin{theorem}\label{AutomorphismsEvenHorikawa}
Let $(K^2, \chi)$ be an admissible pair such that $K^2=2\chi-6$. Then every irreducible component of $\mathfrak{M}_{K^2, \chi}$
contains an open subset consisting of surfaces with group of automorphisms isomorphic to $\mathbb{Z}_2$.
\end{theorem}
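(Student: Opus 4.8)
The plan is to exploit the degree-two canonical map provided by Theorem~\ref{StructureEvenHorikawa} and to reduce the computation of $\mathrm{Aut}(X)$ to a genericity statement about branch divisors on a surface of minimal degree. Write $\phi\colon X\to\Sigma\subseteq\mathbb{P}^{p_g-1}$ for the canonical map, where $\Sigma=\phi(X)$ is the canonical image; by the structure theorem $\phi$ is a finite double cover and $\Sigma$ is a rational surface of minimal degree (a Hirzebruch surface or a cone over a rational normal curve), so that $X$ is recovered as the double cover of $\Sigma$ branched along a divisor $B\subseteq\Sigma$ in a prescribed linear system. Since every $\sigma\in\mathrm{Aut}(X)$ preserves $K_X$, it acts linearly on $H^0(X,K_X)$ and hence induces a projective transformation $\bar\sigma$ with $\phi\circ\sigma=\bar\sigma\circ\phi$; in particular $\bar\sigma$ preserves $\Sigma$, and this yields a homomorphism $\rho\colon\mathrm{Aut}(X)\to\mathrm{Aut}(\Sigma)$.

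First I would identify the kernel and image of $\rho$. If $\bar\sigma=\mathrm{id}$ then $\sigma$ preserves each fibre of the degree-two map $\phi$, so on a general fibre it is either the identity or the fibrewise swap; by connectedness $\sigma\in\{\mathrm{id},\iota\}$, where $\iota$ denotes the canonical involution. Hence $\ker\rho=\langle\iota\rangle\cong\mathbb{Z}_2$. Moreover $\bar\sigma$ carries the branch divisor $B$ to itself, so the image of $\rho$ lands in the stabiliser $\mathrm{Aut}(\Sigma,B):=\{g\in\mathrm{Aut}(\Sigma):g(B)=B\}$. Combining these facts gives an injection $\mathrm{Aut}(X)/\langle\iota\rangle\hookrightarrow\mathrm{Aut}(\Sigma,B)$, so that $\mathrm{Aut}(X)\cong\mathbb{Z}_2$ as soon as $\mathrm{Aut}(\Sigma,B)$ is trivial. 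I would emphasise that this reduction uses only the \emph{descent} direction and never requires lifting automorphisms of $\Sigma$ back to $X$.

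The core of the argument is then to show that, for a general admissible branch divisor $B$, one has $\mathrm{Aut}(\Sigma,B)=\{1\}$. The group $G:=\mathrm{Aut}(\Sigma)$ is a linear algebraic group of known dimension in each case, and it acts linearly on the projective space $|M|=\mathbb{P}(H^0(\Sigma,M))$ parametrising the branch divisors. I would form the incidence variety $\{(B,g)\in|M|\times G : g\neq 1,\ g(B)=B\}$ and bound the dimension of its image in $|M|$: for each $g$ the fixed locus $|M|^g$ is the linear subspace cut out by the $g$-invariant sections, and stratifying $G$ by the type of $g$ one checks that $\dim|M|^g$ plus the dimension of the corresponding stratum stays strictly below $\dim|M|$. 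This forces the projection to $|M|$ to be non-dominant, so the general $B$ has trivial stabiliser. Because Horikawa's classification determines, for the general surface in each connected component, the canonical image $\Sigma$ together with the linear system $|M|$ of admissible branch divisors, carrying out this estimate type by type covers every component.

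Finally, the locus in $\mathfrak{M}_{K^2,\chi}$ on which $\mathrm{Aut}(X)=\mathbb{Z}_2$ is open: the automorphism group of a surface of general type is finite and admits no infinitesimal deformations, so its order is upper semicontinuous in families, and since the canonical involution is always present the minimal-order locus $\{|\mathrm{Aut}(X)|=2\}$ is open. Together with the genericity statement this produces a nonempty open subset with $\mathrm{Aut}(X)\cong\mathbb{Z}_2$ in each connected component, as claimed. The main obstacle I anticipate is the dimension count of the third step: the estimate must be uniform over all admissible pairs $(K^2,\chi)$ and must handle the cone case, where $\mathrm{Aut}(\Sigma)$ is larger and the branch divisor is constrained in its interaction with the vertex, so that the comparison of dimensions is tightest and most delicate there.
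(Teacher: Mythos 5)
Your overall architecture coincides with the paper's: the reduction $\mathrm{Aut}(X)/\langle\iota\rangle\hookrightarrow\mathrm{Aut}(\Sigma,B)$ via the canonical map is the content of Theorem~\ref{AutomorphismsSimpleCyclicCovers} (your kernel computation via fibres is a clean substitute, for $n=2$, for the paper's argument with $\mathrm{Stab}(R)$), and the final openness step is Lemma~\ref{AutoLemma}. Where you genuinely diverge is the proof that $\mathrm{Aut}(\Sigma,B)=\{1\}$ for general $B$: you propose an incidence-variety dimension count over $\mathrm{Aut}(\Sigma)$, whereas the paper shows instead that the general branch \emph{curve} has no non-trivial automorphisms at all (Theorem~\ref{Lefschetz}, via the Lefschetz-pencil result for very ample systems) and then observes that any $g\in\mathrm{Aut}(\Sigma,B)$ restricts to an automorphism of $B$, hence fixes $B$ pointwise, hence fixes more than two points on every ruling fibre and must be trivial (Corollary~\ref{Fe}). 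Your count is plausible but entirely unexecuted: it requires a uniform estimate over all conjugacy classes of $\mathrm{Aut}(\Sigma)$, including unipotent elements and, on $\mathbb{F}_0$ with $\chi=5$, the factor swap; the paper's route avoids this stratification completely.

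The substantive gaps are in how you propose to "cover every component". First, in the second deformation class ($K^2=8k$, Theorem~\ref{DefClass}) the canonical image of the general surface is $\mathbb{F}_{2k+2}$ and the branch divisor is \emph{not} a general member of a very ample linear system: $|6\Delta_0+10(k+1)F|$ has the negative section as a fixed component, so the branch is $\Delta_0+B'$ with $B'$ in the non-ample system $|5\Delta_0+10(k+1)F|$ (degree zero on $\Delta_0$). Your dimension count and your appeal to genericity must be reformulated for this reducible, constrained branch divisor; this is exactly where the paper needs the separate degeneration argument through stable curves (Theorem~\ref{SemicontResult} and Corollary~\ref{F2k+2}), because Theorem~\ref{Lefschetz} does not apply directly either. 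Second, you never verify that your surfaces are canonical models, i.e.\ that $K_X$ is ample, nor which connected component they land in: the natural candidates fail ampleness precisely when $\chi=4$ and when $(K^2,\chi)=(8,7)$, which forces the paper to substitute double planes branched in degree $8$ and $10$ respectively — and this substitution is also what lets one avoid the cone case entirely, which your sketch flags but does not resolve. Without these points the existence of a surface with $\mathrm{Aut}(X)\simeq\mathbb{Z}_2$ in \emph{each} of the two components for $K^2\in 8\cdot\mathbb{Z}$ is not established.
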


The proof of Theorem \ref{AutomorphismsEvenHorikawa} is based on the following idea. Let $X$ be a smooth surface with an involution $\tau$ that induces a $\mathbb{Z}_2$-cover $X\to X/\langle \tau\rangle$ with building data $\{L,B\}$ (see Section \ref{SectionSimpleCyclic}). Then every automorphism $\sigma$ of $X$ that commutes with $\tau$  induces an automorphism $\overline{\sigma}$ of $X/\langle \tau\rangle$ such that $\overline{\sigma}(B)=B$. Hence, if $\tau$ were in the center of $\text{Aut}(X)$ and there were no non-trivial automorphism of $X/\langle \tau\rangle$ leaving $B$ invariant, it would follow that $\text{Aut}(X)=\langle \tau\rangle\simeq \mathbb{Z}_2$. This idea is simple and familiar to experts on the topic but there are several technical details that have to be worked out. Furthermore, even if some of these technical details are somehow expected,
as far as the author knows they have not been written down elsewhere and they are interesting on their own.

The paper is structured as follows. Section \ref{Horikawa surfaces.} contains the results about Horikawa surfaces that  will be needed throughout the note. 
In Section \ref{SectionSimpleCyclic} it is explained how to construct simple cyclic covers and to obtain information about them. An elementary criterion for a surface that can be realized as a degree $n$ simple cyclic cover to have group of automorphisms isomorphic to $\mathbb{Z}_n$ is included.
 Section \ref{AutoSection} contains a well known upper semicontinuity theorem for families of stable curves. A proof of this theorem is included for lack of a reference.
 Section \ref{SectionSimplyConnected} consists of a series of results about simply connected algebraic surfaces that will be needed to prove Theorem \ref{AutomorphismsEvenHorikawa}. Section \ref{FinalSect} is devoted to prove Theorem \ref{AutomorphismsEvenHorikawa} making use of the tools developed in the previous sections.

\section{Horikawa surfaces on the line
\texorpdfstring{$K^2=2\chi-6$}{K2}.} \label{Horikawa surfaces.} 

This section collects the results about Horikawa surfaces that  will be needed throughout the note.

\begin{theorem}[{{\cite[Lemma 1.1]{Hor1}}}]\label{StructureEvenHorikawa}
Let $X$ be a minimal algebraic surface with $K_X^2=2\chi(\mathcal{O}_X)-6$
and $\chi(\mathcal{O}_X) \geq 4$. Then the canonical system $|K_X|$ has no base point. Moreover,
the canonical map $\varphi_{K_X}\colon X\to \mathbb{P}^{p_g(X)-1}$ is a morphism 
of degree 2 onto a surface of degree $p_g(X)-2$ in $\mathbb{P}^{p_g(X)-1}$.
\end{theorem}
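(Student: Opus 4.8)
The plan is to determine the invariants first and then study the canonical map $\varphi = \varphi_{K_X}\colon X \dashrightarrow \mathbb{P}^{p_g(X)-1}$ purely through intersection numbers. Write $p_g = p_g(X)$ and $q = q(X)$, so that $\chi(\mathcal{O}_X) = 1 - q + p_g$ and the hypothesis reads $K_X^2 = 2p_g - 4 - 2q$. Since $X$ is minimal of general type, Noether's inequality $K_X^2 \ge 2p_g - 4$ holds, and comparing the two relations forces $q = 0$ together with $K_X^2 = 2p_g - 4$. Moreover $\chi(\mathcal{O}_X) \ge 4$ gives $p_g = \chi(\mathcal{O}_X) - 1 \ge 3$, so the target space $\mathbb{P}^{p_g-1}$ has dimension at least $2$. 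It therefore suffices to treat a minimal surface of general type lying on the Noether line $K_X^2 = 2p_g - 4$ with $p_g \ge 3$.

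Write $|K_X| = |M| + F$, with $|M|$ the moving part and $F$ the effective fixed part, and set $Z = \varphi(X)$. First I would show that $Z$ is a surface. If instead $Z$ were a curve, then, since $q = 0$, Stein factorization produces a fibration $f\colon X \to \mathbb{P}^1$ whose general fibre $F_0$ satisfies $F_0^2 = 0$ and $M \equiv (p_g - 1)F_0$, the coefficient being $h^0(M) - 1 = p_g - 1$. As $K_X$ is nef and big, the Hodge index theorem rules out $K_X \cdot F_0 = 0$ (since $K_X^2 > 0 = F_0^2$), so $K_X \cdot F_0 > 0$; being equal to $2g(F_0) - 2$ it is then $\ge 2$. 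This yields $K_X^2 \ge K_X \cdot M = (p_g - 1)\,K_X \cdot F_0 \ge 2p_g - 2$, contradicting $K_X^2 = 2p_g - 4$. Hence $Z$ is a nondegenerate surface in $\mathbb{P}^{p_g-1}$ and therefore has degree $\deg Z \ge p_g - 2$.

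Next I would bound the degree $\mu$ of $\varphi$ onto $Z$. Resolving the base locus of $|M|$ by $\pi\colon \tilde{X} \to X$ and letting $\tilde{M}$ be the base-point-free part of $\pi^*|M|$, i.e. the pullback of the hyperplane class under the induced morphism $\tilde{X} \to Z$, the projection formula gives $\mu \cdot \deg Z = \tilde{M}^2$, while $\tilde{M} \le \pi^*K_X$ together with the nefness of $\tilde{M}$ and of $\pi^*K_X$ gives $\tilde{M}^2 \le \tilde{M}\cdot \pi^*K_X \le (\pi^*K_X)^2 = K_X^2 = 2(p_g - 2)$. Combined with $\deg Z \ge p_g - 2$ and $p_g > 2$ this forces $\mu \le 2$. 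The exclusion of the birational case $\mu = 1$ is the delicate point: here I would invoke the classical inequality $K_X^2 \ge 3p_g - 7$, valid whenever the canonical map is birational onto its image, which together with $K_X^2 = 2p_g - 4$ forces $p_g = 3$; but for $p_g = 3$ the image is a nondegenerate surface in $\mathbb{P}^2$, hence equals $\mathbb{P}^2$, so $\mu = 1$ would make $X$ rational, contradicting that it is of general type. Thus $\mu = 2$.

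Finally, $\mu = 2$ turns every inequality above into an equality: $\deg Z = p_g - 2$ and $\tilde{M}^2 = \tilde{M}\cdot \pi^*K_X = (\pi^*K_X)^2$. Writing $\pi^*K_X - \tilde{M} = \pi^*F + E$ with $E$ the exceptional part, these equalities give $\pi^*K_X \cdot (\pi^*F + E) = 0$ and $(\pi^*F + E)^2 = 0$. Since $\pi^*K_X \cdot E = 0$ and $\pi^*F \cdot E = 0$, the first yields $K_X \cdot F = 0$, whence $F$ is supported on a negative-definite configuration of $(-2)$-curves, and the second splits as $F^2 + E^2 = 0$ with both terms nonpositive. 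Therefore $F^2 = 0$ and $E^2 = 0$, forcing $F = 0$ and the absence of base points. Consequently $|K_X|$ is base-point-free, $\varphi_{K_X}$ is a morphism, and it maps $X$ with degree $2$ onto the surface $Z$ of degree $p_g - 2$ in $\mathbb{P}^{p_g-1}$, as asserted. The only non-elementary ingredient is the exclusion of the birational case, which rests on the refined canonical inequality rather than on intersection theory alone.
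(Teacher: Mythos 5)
The paper does not actually prove this statement: it is imported verbatim from Horikawa's work (\cite[Lemma 1.1]{Hor1}) and used as a black box, so there is no internal proof to compare against. Your write-up is a self-contained reconstruction of the classical argument, and its overall architecture is sound and standard: Noether's inequality to force $q=0$ and $K_X^2=2p_g-4$ with $p_g\ge 3$; exclusion of the ``composed with a pencil'' case; the chain $\mu\deg Z=\tilde M^2\le \tilde M\cdot\pi^*K_X\le K_X^2=2(p_g-2)$ against $\deg Z\ge p_g-2$ to get $\mu\le 2$; Castelnuovo's inequality $K_X^2\ge 3p_g-7$ plus the rationality of a degree-one cover of $\mathbb{P}^2$ to kill $\mu=1$; and the equality analysis $(\pi^*F+E)^2=0$ with both summands nonpositive to kill the fixed part and the base points. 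The last step is done carefully and correctly ($K_X\cdot F=0$ forces $F$ into the negative-definite $(-2)$-lattice, $E$ lives in the negative-definite exceptional lattice, so $F^2+E^2=0$ forces both to vanish).

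The one place where the argument as written is not airtight is the pencil case. The canonical map is a priori only rational, so the Stein factorization and the fibration live on a resolution $\tilde X$, not on $X$; on $X$ itself the pencil $\{F_0\}$ may have base points, in which case $F_0^2>0$ and your assertion ``$F_0^2=0$, hence $K_X\cdot F_0=2g(F_0)-2$ is even and positive, hence $\ge 2$'' does not apply verbatim. Hodge index still gives $K_X\cdot F_0\ge 1$, but the case $K_X\cdot F_0=1$ must be excluded separately (adjunction parity forces $F_0^2$ odd and $\ge 1$, then the index theorem forces $K_X^2=F_0^2=1$ and $p_g\le 2$, contradicting $p_g\ge 3$), or else one works with the fiber $\tilde F_0$ on $\tilde X$, where $\tilde F_0^2=0$ and $g(\tilde F_0)\ge 2$ hold, and one must then track the discrepancy between $K_{\tilde X}\cdot\tilde F_0$ and $\pi^*K_X\cdot\tilde F_0$. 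Also, $M\equiv aF_0$ with $a\ge p_g-1$ rather than $a=p_g-1$; only the inequality is needed. These are routine repairs, but as stated that paragraph has a genuine (if small) gap. Everything else, including the appeal to the non-elementary inequality $K_X^2\ge 3p_g-7$ for a birational canonical map, is correctly deployed.
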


\begin{theorem}[{{\cite[Theorem 3.3, Theorem 4.1 and Theorem 7.1]{Hor1}}}]
\label{DefClass}
Let $(K^2, \chi)$ be an admissible pair such that $K^2=2\chi-6$.
If $K^2\notin 8\cdot\mathbb{Z}$ then 
$\mathfrak{M}_{K^2, \chi}$ has a unique irreducible component. 
If $K^2\in 8\cdot\mathbb{Z}$ then $\mathfrak{M}_{K^2, \chi}=\mathfrak{M}_{K^2, \chi}^I\sqcup\mathfrak{M}_{K^2, \chi}^{II}$ has two irreducible connected components.
The image of the canonical map of a surface in $\mathfrak{M}_{K^2, \chi}^I$ is $\mathbb{F}_e$
for some $e\in\{0,2,\ldots,\frac{1}{4}K^2\}$. The image of the canonical map of a 
surface in $\mathfrak{M}_{K^2, \chi}^{II}$ is $\mathbb{F}_{\frac{1}{4}K^2+2}$ if $K^2>8$ and $\mathbb{P}^2$
or a cone over a rational curve of degree $4$ in $\mathbb{P}^4$ if $K^2=8$.
\end{theorem}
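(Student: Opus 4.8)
The plan is to extract from Theorem~\ref{StructureEvenHorikawa} the structure of $X$ as a double cover and to reduce the deformation classification to the geometry of its branch locus. These surfaces are regular, i.e. $q(X)=0$, so $p_g(X)=\chi-1$, and since $\chi\geq 4$ Theorem~\ref{StructureEvenHorikawa} realizes the canonical map as a degree-$2$ morphism $\varphi_{K_X}\colon X\to\Sigma$ onto a surface $\Sigma\subset\mathbb{P}^{p_g(X)-1}$ of minimal degree $p_g(X)-2=\chi-3$. The first step is to invoke the classification of surfaces of minimal degree: $\Sigma$ is a smooth rational normal scroll, hence a Hirzebruch surface $\mathbb{F}_e$; a cone over a rational normal curve; or the $2$-uple Veronese embedding of $\mathbb{P}^2$, which occurs only in $\mathbb{P}^5$ and accounts for the exceptional image $\mathbb{P}^2$ when $K^2=8$. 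This produces the list of candidate canonical images recorded in the statement.

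The second step is to reconstruct $X$ from $\Sigma$ as a flat double cover and to read off the admissible branch data. Passing if necessary to the minimal desingularization $\mathbb{F}_e\to\Sigma$ and using the standard basis $\Delta_0,\Gamma$ of $\operatorname{Pic}(\mathbb{F}_e)$ with $\Delta_0^2=-e$, $\Gamma^2=0$ and $\Delta_0\cdot\Gamma=1$, I would write the cover as $\pi\colon X\to\mathbb{F}_e$ with branch divisor $B\in|2L|$ and combine the double-cover formula $K_X=\pi^\ast(K_{\mathbb{F}_e}+L)$ with the identity $K_X=\varphi_{K_X}^\ast H$, where $H$ is the hyperplane class of the scroll. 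Solving these relations determines $L$, and hence the class of $B$, in terms of $e$ and the invariants, while the numerical identity $K_X^2=2H^2=2(\chi-3)=K^2$ is recovered automatically. Imposing that $X$ be minimal of general type --- equivalently, that $B$ have at worst negligible singularities and that the negative section $\Delta_0$ not be forced into $B$ --- yields the parity and range of $e$ recorded in the theorem, namely $0\leq e\leq\tfrac14 K^2$, with the cone and $\mathbb{P}^2$ images arising as the degenerations in which $\Delta_0$ is contracted.

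The third step is to prove deformation equivalence inside the admissible range. Here I would use that $\mathbb{F}_e$ admits flat deformations to $\mathbb{F}_{e-2}$ and that a general branch divisor $B_e\in|2L_e|$ can be carried through such a deformation to a general $B_{e-2}\in|2L_{e-2}|$ while keeping its singularities negligible; taking double covers fibrewise produces a flat family of Horikawa surfaces joining the surface with canonical image $\mathbb{F}_e$ to the one with image $\mathbb{F}_{e-2}$. Iterating, the even values of $e$ in the range $0\leq e\leq\tfrac14 K^2$ collapse to a single deformation type, which is the first class of the statement.

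The crux, and the step I expect to be the main obstacle, is the dichotomy governed by $K^2\bmod 8$. One must show that when $8\mid K^2$ the extremal index $e=\tfrac14 K^2+2$ --- realized with canonical image $\mathbb{F}_{\frac14 K^2+2}$ when $K^2>8$, and with image $\mathbb{P}^2$ or a cone over a rational normal quartic when $K^2=8$ --- yields a second deformation class genuinely disjoint from the first, while no such extra class survives when $8\nmid K^2$. The difficulty is twofold. First, the parity computation must pin down exactly which $e$ carry an admissible branch divisor and show that in the extremal case the branch class meets $\Delta_0$ so rigidly that it cannot be deformed into the generic configuration. Second, and more seriously, one must prove that the two families are not merely disjoint in this construction but inequivalent under \emph{arbitrary} deformation; since the two classes share the invariants $K^2$ and $\chi$ --- indeed they are homeomorphic --- this separation cannot come from discrete invariants and must be read off from the local deformation spaces. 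Following \cite[Theorems 3.3, 4.1 and 7.1]{Hor1}, I would establish it by a local analysis of the deformations of each configuration together with a dimension count showing that each family is open and closed in the locus of Horikawa surfaces with the given invariants. Proving this rigidity --- that a deformation cannot jump between the two branch configurations --- is the technical heart of the argument, the remaining steps being the classification of minimal-degree images and the explicit double-cover bookkeeping.
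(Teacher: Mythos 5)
Your proposal should first be measured against the right target: the paper itself contains \emph{no} proof of this statement --- Theorem~\ref{DefClass} is imported wholesale from Horikawa, as the bracketed citation \cite[Theorems 3.3, 4.1 and 7.1]{Hor1} indicates, and the note never reproves any part of it. So the comparison is with Horikawa's original argument, and at the level of architecture your sketch reconstructs it correctly: degree-$2$ canonical morphism onto a surface of minimal degree (Theorem~\ref{StructureEvenHorikawa}), the del Pezzo classification of minimal-degree surfaces (scrolls, cones, the Veronese), reconstruction of $X$ as a double cover with branch class determined by $e$ and $\chi$, the deformations $\mathbb{F}_e\rightsquigarrow\mathbb{F}_{e-2}$ carrying the branch data, and a rigidity statement for the extremal configuration. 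That is indeed the skeleton of \cite{Hor1}.

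However, as a proof the proposal has a genuine gap at exactly the step you yourself flag as the crux: the existence of \emph{two} deformation classes when $8\mid K^2$ is deferred to ``a local analysis together with a dimension count showing each family is open and closed,'' and this is not carried out --- nor would a dimension count suffice. What is actually needed is \emph{completeness} of each constructed family at every member, i.e.\ surjectivity of the characteristic (Kodaira--Spencer) map onto $H^1(T_X)$, which Horikawa establishes by explicit cohomology computations on the double covers; openness of each locus follows from that, disjointness follows because the canonical image type is intrinsic to the surface, and connectedness of each family comes from irreducibility of the branch-data parameter spaces together with your step three. Without the completeness computation the argument does not close. Two further inaccuracies in the sketch: (a) your dividing line ``the negative section not forced into $B$'' is wrong --- for $K^2=8k$ the relevant branch class satisfies $B\cdot\Delta_0=4k+4-3e$, so $\Delta_0\subset B$ already occurs \emph{inside the first class} once $e>(4k+4)/3$; the true dichotomy is $e\leq\tfrac14K^2$ (where $B-\Delta_0$ still meets $\Delta_0$) versus $e=\tfrac14K^2+2$ (where $B-\Delta_0$ is disjoint from $\Delta_0$); (b) the parenthetical ``indeed they are homeomorphic'' holds only for $k$ even: for $k$ odd the second class is spin (for $K^2=8$ the double plane branched in degree $10$ has $K_X=2f^*\mathcal{O}_{\mathbb{P}^2}(1)$) while the first class is not, so in half the cases the two classes are already distinguished topologically, and the delicate homeomorphic-but-deformation-inequivalent phenomenon you describe is the $k$ even case (most famously $K^2=16$).
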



\section{Simple cyclic abelian covers.}\label{SectionSimpleCyclic}
Let $Y$ be a smooth surface. Suppose there exist a line bundle $L$ and an effective divisor $B$ on $Y$ such that 
$B\in |nL|$.
We denote by $V(L)$ the total space of the bundle $L$,
by $\pi\colon V(L)\to Y$ the bundle projection
and by  $t\in H^0(V(L),\pi^*L)$ the tautological section.
If $s\in H^0(Y, nL)$ is a section 
vanishing exactly along $B$, then the zero divisor $X$
of the section $t^n-\pi^*s\in H^0(V(L),\pi^*(nL))$
defines a surface in $V(L)$.
Moreover, if we denote by $\mu$ a primitive $n$-root of unity, the map $\tau\colon t\mapsto \mu\cdot t$ induces a $\mathbb{Z}_n$-action on $X$ such that $f:=\pi|_{X}$ can be realized as the quotient map $X\to X/\mathbb{Z}_n\simeq Y$.
The morphism $f\colon X\to Y$ is said to be a simple cyclic cover of degree $n$ with branch locus $B$ (see \cite[Section I.17]{Barth2004}). The set $\{L,B\}$ is known as the building data of the simple cyclic cover. 

 Simple cyclic covers are a special type of abelian cover. Let $G$ be a finite abelian group. A $G$-cover of a surface $Y$ is a finite map $f\colon X\to Y$ together with a faithful action of $G$ on $X$ such that $f$ exhibits $Y$ as $X/G$. Abelian covers in general were first studied by Pardini \cite{Par1991} but simple cyclic covers were already considered by Comessatti \cite{Comessatti}. Other references where particular types of covers were studied are 
\cite{Cata1984}, \cite{Mir1985},  \cite{PerssonDC} or \cite{Tan1991}.
 
In this note we are mainly interested in $\mathbb{Z}_2$-covers. Since every $\mathbb{Z}_2$-cover is simple cyclic and some of the results needed to prove Theorem \ref{AutomorphismsEvenHorikawa} can be easily generalized to simple cyclic covers, we will also deal with this type of covers.  

\begin{remark}\label{RedRedBuildingData} 
Let $f\colon X\to Y$ be a simple cyclic cover of degree $n$ with branch locus $B$.
 Note that if the Picard group of $Y$ has no $n$-torsion then the line bundle $L$ can be deduced from the divisor $B$. 
 In this note we are only going to consider covers of simply connected surfaces, for which the Picard group has no torsion (cf. \cite[Remark 3.10]{MendesPardini2021}).
\end{remark}

\begin{remark}
Let $f\colon X\to Y$ be a simple cyclic cover of degree $n$ with branch locus $B$.
 If $Y$ is smooth, then $X$ is smooth if and only if $B$ is smooth (cf. \cite[Section I.17]{Barth2004} or \cite[Proposition 3.1]{Par1991}) and in this case we will say that
 $f\colon X\to Y$ is a smooth simple cyclic cover.
\end{remark}

\begin{proposition}
[{{\cite[Proposition 4.2]{Par1991}}}] \label{SimpleCyclicInvariants}
Let $Y$ be a smooth surface and $f\colon X\to Y$ a smooth degree $n$ simple cyclic cover with building data 
$\{L,B\}$. Then:
\begin{equation*}
\begin{split}
K_X\equiv f^*(K_Y+(n-1)L),\\
K_X^2=n(K_Y+(n-1)L)^2,\\
p_g(X)=p_g(Y)+\sum_{i=1}^{n-1}h^0(K_Y+iL),\\
\chi(\mathcal{O}_X)=n\chi(\mathcal{O}_Y)+\frac{1}{2}\sum_{i=1}^{n-1}iL(iL+K_Y).
\end{split}
\end{equation*}
\end{proposition}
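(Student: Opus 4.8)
The plan is to reduce all four identities to two structural facts about the simple cyclic cover $f\colon X\to Y$: an adjunction computation inside the total space $V(L)$, and the eigensheaf decomposition of $f_*\mathcal{O}_X$ under the $\mathbb{Z}_n$-action generated by $\tau$.

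First I would establish the canonical bundle formula. By construction $X$ is a smooth divisor in $V(L)$ cut out by the section $t^n-\pi^*s$ of $\pi^*(nL)$, so $\mathcal{O}_{V(L)}(X)\cong\pi^*(nL)$. The projection $\pi$ is an affine line bundle whose relative tangent bundle is $\pi^*L$, whence $K_{V(L)}\cong\pi^*(K_Y-L)$. Adjunction then gives
$$K_X\cong\big(K_{V(L)}+X\big)\big|_X\cong\pi^*\big(K_Y-L+nL\big)\big|_X=f^*(K_Y+(n-1)L),$$
since $f=\pi|_X$; this is the first identity. The second follows immediately, because for a finite morphism of degree $n$ the projection formula yields $f^*\alpha\cdot f^*\beta=n\,(\alpha\cdot\beta)$, so $K_X^2=n(K_Y+(n-1)L)^2$.

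The remaining two identities rest on the decomposition $f_*\mathcal{O}_X\cong\bigoplus_{i=0}^{n-1}L^{-i}$. To see this I would use that $f$ is finite and flat of degree $n$, so that $f_*\mathcal{O}_X$ is locally free of rank $n$ and locally generated over $\mathcal{O}_Y$ by $1,t,\dots,t^{n-1}$ subject to $t^n=\pi^*s$; the action of $\tau$ splits it into eigensheaves, and the summand spanned by $t^i$ is identified with $L^{-i}$ because $t$ is a section of $\pi^*L$, so multiplication by $t^i$ realizes $L^{-i}$ as a subsheaf of functions. Granting this, I would compute $p_g(X)=h^0(X,K_X)=h^0\big(X,f^*(K_Y+(n-1)L)\big)$, push forward using the projection formula and the decomposition, and reindex the resulting sum $\bigoplus_{i=0}^{n-1}H^0(Y,K_Y+(n-1-i)L)$ to obtain $p_g(Y)+\sum_{i=1}^{n-1}h^0(K_Y+iL)$. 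For the last identity, finiteness of $f$ forces the higher direct images to vanish, so $\chi(\mathcal{O}_X)=\chi(Y,f_*\mathcal{O}_X)=\sum_{i=0}^{n-1}\chi(Y,L^{-i})$; applying Riemann--Roch on the surface $Y$ to each $\chi(Y,L^{-i})=\chi(\mathcal{O}_Y)+\tfrac12(iL)(iL+K_Y)$ and summing yields the stated formula, the $i=0$ term contributing nothing to the correction.

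The only genuinely delicate point is the decomposition of $f_*\mathcal{O}_X$, and within it the bookkeeping that pins the $i$-th eigensheaf to $L^{-i}$ rather than some other twist; once the grading convention is fixed, everything else is the projection formula, adjunction, and Riemann--Roch. I would sanity-check the signs against the double cover of $\mathbb{P}^2$ branched along a smooth sextic, where $n=2$ and $L=\mathcal{O}(3)$: the formulas must return $K_X\equiv 0$, $\chi(\mathcal{O}_X)=2$ and $p_g(X)=1$, recovering a $K3$ surface, which confirms the normalizations.
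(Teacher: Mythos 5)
Your argument is correct: the adjunction computation in $V(L)$ gives the canonical bundle formula, the eigensheaf decomposition $f_*\mathcal{O}_X\cong\bigoplus_{i=0}^{n-1}L^{-i}$ is pinned down correctly, and the projection formula plus Riemann--Roch yield the remaining identities (your $K3$ sanity check is consistent). The paper itself offers no proof --- it cites Pardini's Proposition 4.2 --- and your derivation is essentially the standard one found there, so there is nothing to flag.
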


\begin{remark}\label{CanonicalImageSimpleCyclicCover}
Let $Y$ be a smooth surface and let us consider a smooth simple cyclic cover $f\colon X\to Y$ of degree $n$ with building data $\{L,B\}$. Let us assume that 
    \begin{equation*}
     h^0(K_Y)=h^0(K_Y+L)=\cdots=h^0(K_Y+(n-2)L)=0.
    \end{equation*}
    Then $K_X=f^*(K_Y+(n-1)L)$ and $p_g(X)=h^0(K_Y+(n-1)L)=:N$ by Proposition \ref{SimpleCyclicInvariants}. If we denote  by $i\colon Y\dashrightarrow \mathbb{P}^{N-1}$ the (possibly rational) map defined by the complete linear system $|K_Y+(n-1)L|$, it follows 
that $i\circ f$ is the map induced by the complete linear system $|K_X|$, i.e. it is the canonical map of $X$. In particular $i(Y)$ is the canonical image of $X$.
\end{remark}

\begin{remark}\label{AutXD}
 Let $D$ be an effective divisor on $Y$. In what follows we will denote by $\text{Aut}(Y,D)$ the subset of $\text{Aut}(Y)$ consisting
 of automorphisms $h$ of $Y$ such that $h(D)=D$.
\end{remark}

The following result is clearly inspired by \cite[Lemma 5.3]{Manetti1996} and \cite{Manetti1997}. Although it is probably well known, a proof is included for lack of a reference.

\begin{theorem}\label{AutomorphismsSimpleCyclicCovers}
 Let $Y$ be a smooth surface and $f\colon X\to Y$ a degree $n\geq 2$ simple cyclic cover with building data $\{L,B\}$
such that the branch locus $B$ is a general member  of the linear system $|nL|$. Suppose that the map defined by the complete linear system $|K_Y+(n-1)L|$
 is birational and that
 \begin{equation*}
  h^0(K_Y)=h^0(K_Y+L)=\cdots=h^0(K_Y+(n-2)L)=0.
 \end{equation*}
 If we denote by $\tau$ an order $n$ automorphism of $X$ such that $f$ can be realized as the quotient of $X$ by the action of $\langle \tau\rangle\simeq \mathbb{Z}_n$, then:
 \begin{enumerate}
  \item[i)] $\langle \tau\rangle\simeq \mathbb{Z}_n$ is a normal subgroup of $\text{Aut}(X)$;
  \item[ii)] $\langle \tau\rangle\simeq \mathbb{Z}_2$ is in the center of $\text{Aut}(X)$ if $n=2$;
  \item[iii)] $\text{Aut}(X)=\langle \tau\rangle\simeq \mathbb{Z}_n$ if $\text{Aut}(Y,B)=\{1\}$.
 \end{enumerate} 
\end{theorem}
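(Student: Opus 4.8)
The plan is to exploit the factorization of the canonical map of $X$ furnished by Remark \ref{CanonicalImageSimpleCyclicCover}. Under the stated vanishing hypotheses one has $K_X=f^*(K_Y+(n-1)L)$ and $\varphi_{K_X}=i\circ f$, where $i$ is the (now assumed birational) map attached to $|K_Y+(n-1)L|$. Consequently $\varphi_{K_X}$ is generically finite of degree $n=\deg f$, and its general fibre is precisely a $\langle\tau\rangle$-orbit. First I would record that every $\sigma\in\text{Aut}(X)$ acts linearly on $H^0(X,K_X)$ by pullback and hence induces a projective transformation $\bar\sigma$ of $\mathbb{P}^{p_g(X)-1}$ with $\varphi_{K_X}\circ\sigma=\bar\sigma\circ\varphi_{K_X}$; the assignment $\sigma\mapsto\bar\sigma$ can be arranged to be a homomorphism, which is all that is needed below. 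Since $f\circ\tau=f$ we get $\varphi_{K_X}\circ\tau=\varphi_{K_X}$, and because the canonical image is non-degenerate this forces $\bar\tau=\mathrm{id}$.

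The technical heart of the argument is the identity
\[ \{\,g\in\text{Aut}(X) : \varphi_{K_X}\circ g=\varphi_{K_X}\,\}=\langle\tau\rangle. \]
The inclusion $\supseteq$ is clear. For $\subseteq$, let $g$ fix the canonical map. For a general point $x\in X$ the fibre $\varphi_{K_X}^{-1}(\varphi_{K_X}(x))$ equals $f^{-1}(f(x))$, which is the $\langle\tau\rangle$-orbit $\{x,\tau(x),\dots,\tau^{n-1}(x)\}$ of $n$ distinct points; hence $g(x)=\tau^{k(x)}(x)$ for some $k(x)\in\mathbb{Z}/n$. By a connectedness argument $k(x)$ is constant, say equal to $k$, on a dense open subset, so $g$ and $\tau^k$ agree there and therefore on all of $X$. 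This says exactly that the group of deck transformations of the generically finite map $\varphi_{K_X}$ coincides with that of the Galois cover $f$, namely $\langle\tau\rangle$.

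With this in hand the three statements follow quickly. For i), given $\sigma\in\text{Aut}(X)$ one has $\overline{\sigma\tau\sigma^{-1}}=\bar\sigma\,\bar\tau\,\bar\sigma^{-1}=\mathrm{id}$ since $\bar\tau=\mathrm{id}$, so $\varphi_{K_X}\circ(\sigma\tau\sigma^{-1})=\varphi_{K_X}$ and the displayed identity gives $\sigma\tau\sigma^{-1}\in\langle\tau\rangle$; thus $\langle\tau\rangle$ is normal in $\text{Aut}(X)$. For ii), when $n=2$ normality forces $\sigma\tau\sigma^{-1}\in\{1,\tau\}$, and as $\sigma\tau\sigma^{-1}$ has order $2$ it must equal $\tau$, i.e. $\tau$ lies in the center.

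For iii), normality of $\langle\tau\rangle$ means every $\sigma$ permutes the $\langle\tau\rangle$-orbits and therefore descends to an automorphism $\overline{\sigma}_Y$ of $Y=X/\langle\tau\rangle$ with $f\circ\sigma=\overline{\sigma}_Y\circ f$. Because $\sigma$ normalizes $\langle\tau\rangle$ it preserves the union of the fixed loci of the nontrivial powers of $\tau$, that is, the ramification locus of $f$; pushing forward gives $\overline{\sigma}_Y(B)=B$, so $\overline{\sigma}_Y\in\text{Aut}(Y,B)=\{1\}$. Then $f\circ\sigma=f$, whence $\varphi_{K_X}\circ\sigma=i\circ f\circ\sigma=\varphi_{K_X}$ and the displayed identity yields $\sigma\in\langle\tau\rangle$, proving $\text{Aut}(X)=\langle\tau\rangle$. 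I expect the main obstacle to be the careful verification of the displayed deck-group identity together with the claim that $\overline{\sigma}_Y$ preserves $B$: both require controlling the behaviour of $\sigma$ along the ramification of $f$ and over the indeterminacy locus of the possibly non-morphic map $i$, rather than merely at a single general point.
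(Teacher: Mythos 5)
Your proof is correct, but the crucial step is carried out by a genuinely different mechanism than in the paper. Both arguments start from the factorization $\varphi_{K_X}=i\circ f$ of Remark \ref{CanonicalImageSimpleCyclicCover}, but the paper only extracts from it that the ramification divisor $R=(f^*B)_{\mathrm{red}}$, being the critical locus of the canonical map, satisfies $h(R)=R$ for all $h\in\mathrm{Aut}(X)$; it then observes that $h^{-1}\tau h$ fixes $R$ \emph{pointwise} and proves that the pointwise stabilizer $\mathrm{Stab}(R)$ is a \emph{cyclic} group (the components of $R$ are smooth by \cite[Proposition 1.1]{Cata1984}, $\mathrm{Aut}(X)$ is finite since $X$ is of general type, and one linearizes along the normal direction to each $R_i$ as in \cite[Lemma 1.1]{Par1991}); normality then follows because a cyclic group has at most one subgroup of order $n$. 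You instead identify the full deck group of $\varphi_{K_X}$ with $\langle\tau\rangle$ via the birationality of $i$ and a connectedness argument, and deduce normality from $\bar\tau=\mathrm{id}$ in $\mathrm{PGL}$. Your route buys something: it does not invoke finiteness of $\mathrm{Aut}(X)$, smoothness of the components of $R$, or the local linearization lemma, and it only needs the cover to be generically \'etale and $X$ irreducible, so the generality of $B$ plays no role in parts i) and ii). What it costs is the extra care you yourself flag (behaviour over the indeterminacy/non-injectivity locus of $i$, and constancy of the exponent $k(x)$ on a dense connected open set), whereas the paper's stabilizer argument is purely local along $R$ and sidesteps the fibres of $\varphi_{K_X}$ entirely. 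Part iii) is essentially the same in both treatments, except that the paper gets $\overline{h}(B)=B$ directly from $h(R)=R$ (critical locus), while you derive $\sigma(R)=R$ from normality via fixed loci; both are valid.
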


\begin{proof} 
First of all, it follows from Remark \ref{CanonicalImageSimpleCyclicCover} that the canonical map $\Phi$ of $X$ is the composition of $f$ with the map $\Psi$ induced by the complete linear system $|K_Y+(n-1)L|$. In particular, $\Phi$ has degree $\text{deg}(\Phi)=n$ because $\Psi$ is birational. 

Let us consider the Galois group $G=\left\{g\in\text{Aut} (X):\Phi\circ g=\Phi\right\}$ of $\Phi$.
On the one hand, $\langle\tau\rangle\simeq \mathbb{Z}_n$ is contained in $G$. Given that the order of $G$ is at most $\text{deg}(\Phi)=n$, we infer that $G=\langle\tau\rangle\simeq \mathbb{Z}_n$.
On the other hand, $G$ coincides with
$\left\{g\in\text{Aut} (X):g^*C=C\text{ for every }C\in|K_X|\right\}$
and this is clearly a normal subgroup of $\text{Aut}(X)$.
Hence, we conclude that $\langle\tau\rangle\simeq \mathbb{Z}_n$ is a normal subgroup of $\text{Aut}(X)$. 
If $n=2$, then 
$\langle\tau\rangle\simeq \mathbb{Z}_2$ is in the center of $\text{Aut}(X)$ because order $2$ normal subgroups are always central.

That being said, we deduce from the fact that $\langle\tau\rangle\simeq \mathbb{Z}_n$ is normal in 
 $\text{Aut}(X)$
 that $h$ induces an automorphism $\overline{h}$ of 
 $X/\mathbb{Z}_n\simeq Y$ for every $h\in \text{Aut}(X)$. Moreover, denoting by $R=(f^*B)_{\text{red}}$ the ramification of $f$, I claim that $h(R)=R$ and therefore $\overline{h}(B)=B$. Indeed, let $T=\text{fix}(\tau)$ be the set of points fixed by $\tau$. Then the set $\text{fix}(h\tau h^{-1})$ of points fixed by $h\tau h^{-1}$ is equal to $h(T)$. Now, since $G$ is normal, there exists an integer $k$ coprime to $n$ such that 
 $h\tau h^{-1}=\tau^k$ and therefore $h(T)=\text{fix}(h\tau h^{-1})=\text{fix}(\tau^k)=T$. We conclude that $h(R)=R$ because $R$ is the divisorial part of $T$. As a consequence, $\overline{h}(B)=B$. If besides $\text{Aut}(Y,B)=\{1\}$,
then $\overline{h}$ is the trivial automorphism of $Y\simeq X/\mathbb{Z}_n$
 and therefore $h$ belongs to $\langle\tau\rangle\simeq \mathbb{Z}_n$. Thus, $\text{Aut}(X)=\langle\tau\rangle\simeq \mathbb{Z}_n$ in this case.
\end{proof}

\section{Automorphisms of a family of stable curves.}\label{AutoSection}

Let $f\colon X\to B$ be a fibration, i.e. a proper and surjective morphism with connected fibers from a surface $X$ to a smooth connected curve $B$. As in \cite{FanPar} we will denote by 
$\text{Aut}_{X/B}$ the $B$-scheme of automorphisms of the fibers of $f$. In particular, the fiber of 
$\text{Aut}_{X/B}\to B$ over $b\in B$ is isomorphic to the
group of automorphisms of the fiber of $f$ over $b$.
The aim of this section is to prove the following:
\begin{theorem}
\label{SemicontResult}
 Let $\mathcal{X}\to \Delta$ be a genus $g\geq 2$ fibration 
 over the unit disc $\Delta\subset  \mathbb{C}$ such that $\mathcal{X}$ is smooth and $\mathcal{X}_t$ is a stable curve for every $t\in\Delta$.
 Then 
 $\text{Aut}_{\mathcal{X}/\Delta}\to \Delta$ is proper and 
 the map sending $t\in\Delta$ to the order of the group of automorphisms of $\mathcal{X}_t$ is an upper semicontinuous function.
\end{theorem}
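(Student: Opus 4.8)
The plan is to realize $\text{Aut}_{\mathcal{X}/\Delta}$ as a scheme that is quasi-finite and separated over $\Delta$, to upgrade quasi-finiteness to properness by means of the valuative criterion (the essential step), and finally to read off upper semicontinuity from the resulting finiteness together with the absence of infinitesimal automorphisms in characteristic zero. Throughout I would use that, since $g\geq 2$ and every fibre is stable, the relative dualizing sheaf $\omega_{\mathcal{X}/\Delta}$ is relatively ample.

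First I would give $\text{Aut}_{\mathcal{X}/\Delta}$ its scheme structure by identifying a fibre automorphism with its graph. The graph of an automorphism $\sigma$ of $\mathcal{X}_t$ is a closed subscheme of $\mathcal{X}_t\times\mathcal{X}_t$, and its Hilbert polynomial with respect to the relatively ample polarization on $\mathcal{X}\times_\Delta\mathcal{X}$ coming from $\omega_{\mathcal{X}/\Delta}$ agrees with that of the diagonal, because $\sigma$ preserves the dualizing sheaf and hence $\sigma^*\omega\simeq\omega$. Thus $\text{Aut}_{\mathcal{X}/\Delta}$ sits inside a single relative Hilbert scheme $\text{Hilb}^P(\mathcal{X}\times_\Delta\mathcal{X}/\Delta)$, which is projective over $\Delta$, as the locally closed locus parametrizing graphs of isomorphisms; this makes it separated and of finite type over $\Delta$. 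Its fibre over $t$ is $\text{Aut}(\mathcal{X}_t)$, which is finite since a stable curve of genus $\geq 2$ has finite automorphism group, so $\text{Aut}_{\mathcal{X}/\Delta}\to\Delta$ is quasi-finite.

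The main obstacle is properness, which I would establish through the valuative criterion, i.e. the uniqueness of stable limits. Given a DVR $R$ with fraction field $K$ and a $\Delta$-morphism $\text{Spec}\,K\to\text{Aut}_{\mathcal{X}/\Delta}$, pulling the family back along $\text{Spec}\,R\to\Delta$ yields a family $\mathcal{X}_R\to\text{Spec}\,R$ of stable curves together with an isomorphism $\sigma_K$ of its generic fibre, and the task is to extend $\sigma_K$ to an isomorphism $\mathcal{X}_R\to\mathcal{X}_R$ over $R$. To do this I would take the closure $\Gamma$ of the graph of $\sigma_K$ inside $\mathcal{X}_R\times_R\mathcal{X}_R$ and pass to a common semistable model $\widetilde{\Gamma}$ dominating both copies of $\mathcal{X}_R$ through the two projections. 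The key point is that a stable central fibre is the relatively minimal model carrying an ample dualizing sheaf, so it is recovered from $\widetilde{\Gamma}$ by the canonical contraction of the chains of rational curves in the central fibre; since that contraction depends only on the generic fibre, the two projections induce the same contraction and $\sigma_K$ descends to an isomorphism of $\mathcal{X}_R$ extending it. This is precisely the separatedness of the moduli of stable curves, and it is the step that uses the stability hypothesis in an essential way.

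Once properness is known, $\pi\colon\text{Aut}_{\mathcal{X}/\Delta}\to\Delta$ is finite, being proper and quasi-finite. Hence $\pi_*\mathcal{O}_{\text{Aut}_{\mathcal{X}/\Delta}}$ is a coherent sheaf on $\Delta$, and the function sending $t$ to $\dim_{\mathbb{C}}\bigl(\pi_*\mathcal{O}_{\text{Aut}_{\mathcal{X}/\Delta}}\otimes k(t)\bigr)$, namely the length of the fibre of $\pi$ over $t$, is upper semicontinuous by the standard semicontinuity of fibre dimensions of a coherent sheaf. In characteristic zero a stable curve has no infinitesimal automorphisms, so $\pi$ is unramified, its fibres are reduced, and this length coincides with the number of points of the fibre, that is with $|\text{Aut}(\mathcal{X}_t)|$. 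Therefore $t\mapsto|\text{Aut}(\mathcal{X}_t)|$ is upper semicontinuous, which completes the plan.
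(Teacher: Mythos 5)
Your proof is correct, but the two key steps are carried out by genuinely different arguments than in the paper. For properness, the paper exploits the smoothness of the total space $\mathcal{X}$: it resolves the birational self-map $\hat{\sigma}$ induced by a section over the punctured disc by a \emph{minimal} sequence of blow-ups $\varepsilon\colon\mathcal{X}'\to\mathcal{X}$, and derives the contradiction $-1=\Gamma K_{\mathcal{X}'}=\Gamma f^*K_{\mathcal{X}}+\Gamma\sum E_i\geq 0$ for the last $(-1)$-curve $\Gamma$, using only that $K_{\mathcal{X}}$ is relatively nef because the fibres are stable; it then still has to argue separately that the resulting morphism is an automorphism (some power of it is the identity on a dense open set). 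You instead invoke, and sketch the proof of, the uniqueness of stable models over a DVR --- i.e.\ the separatedness of $\overline{\mathcal{M}}_g$ --- via a common (semistable) resolution of the closure of the graph and the intrinsic contraction to the stable model; this buys an extension that is automatically an isomorphism and does not use smoothness of $\mathcal{X}$, at the cost of importing (or re-deriving) a heavier standard theorem. For semicontinuity, the paper decomposes $\text{Aut}_{\mathcal{X}/\Delta}=Y\sqcup Z$ into one-dimensional components and isolated points, shows $Y\to\Delta$ is flat and \'etale (hence with constant fibre cardinality) and observes that the isolated points can only increase the count; you instead note that proper plus quasi-finite gives finite, so $t\mapsto\dim_{\mathbb{C}}\bigl(\pi_*\mathcal{O}\otimes k(t)\bigr)$ is upper semicontinuous, and identify this length with $|\text{Aut}(\mathcal{X}_t)|$ via reducedness of the fibres. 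Both semicontinuity arguments rest on the same essential input --- a stable curve has no infinitesimal automorphisms, so the fibres of $\pi$ are finite and reduced --- and your coherent-sheaf formulation is the more standard and slightly cleaner of the two; you also supply the scheme structure on $\text{Aut}_{\mathcal{X}/\Delta}$ via Hilbert schemes of graphs, which the paper simply imports from the literature.
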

This result is well known but we include a proof, that was pointed out to the author by Rita Pardini, for lack of a reference.
\begin{proof}[Proof of Theorem \ref{SemicontResult}]
 Denoting $\Delta^*=\Delta\setminus\{0\}$ and $\mathcal{X}^*=\mathcal{X}\setminus\mathcal{X}_0$, the properness of $\text{Aut}_{\mathcal{X}/\Delta}\to \Delta$ will follow if we show that every section of $\text{Aut}_{\mathcal{X}^*/\Delta^*}\to \Delta^*$ can be extended uniquely to a section of $\text{Aut}_{\mathcal{X}/\Delta}\to \Delta$ by the 
 valuative criterion of properness. Let $\sigma$ be a section of $\text{Aut}_{\mathcal{X}^*/\Delta^*}\to \Delta^*$ and
 denote by $\hat{\sigma}\colon\mathcal{X}\dashrightarrow \mathcal{X}$ the birational map induced by $\sigma$. Suppose that $\hat{\sigma}$ cannot be extended to a morphism and consider a minimal sequence of blow-ups $\varepsilon\colon\mathcal{X}'\to \mathcal{X}$  such that $f:=\hat{\sigma}\circ \varepsilon$ is a morphism. Write
  \begin{equation*}
   K_{\mathcal{X}'}=f^*K_{\mathcal{X}}+\sum_{i=1}^r E_i  
   \end{equation*}
where each $E_i, i\in\{1,\ldots, r\}$ is an $f$-exceptional curve. By construction the last irreducible $(-1)$-curve $\Gamma$ arising from $\varepsilon$ is not contracted by $f$ and so $\Gamma\sum_{i=1}^r E_i\geq 0$ because $\Gamma$ is not a component of $\sum_{i=1}^r E_i$. In addition, since
$\mathcal{X}_t$ is a stable curve for every $t\in\Delta$ the divisor $K_{\mathcal{X}}$ is relatively nef and
$\Gamma f^*K_{\mathcal{X}}\geq0$. We conclude that:
\begin{equation*}
 -1= \Gamma K_{\mathcal{X}'}= \Gamma f^*K_{\mathcal{X}}+\Gamma\sum_{i=1}^r E_i \geq 0,
\end{equation*}
which is a contradiction. Hence we can assume $\hat{\sigma}$ to be a morphism because it can be extended to $\mathcal{X}$.
Moreover, this extension is unique because $\mathcal{X}^*$ is an open and dense subset of $\mathcal{X}$.
Since $\text{Aut}_{\mathcal{X}/\Delta}\to \Delta$ has finite fibers because $\mathcal{X}_t$ is a stable curve for every $t\in\Delta$, there exists an integer $m>0$ such that 
$(\hat{\sigma}|_{\mathcal{X}^*})^m$ is the trivial automorphism and therefore $\hat{\sigma}^m$ is the trivial automorphism on $\mathcal{X}$. In particular, $\hat{\sigma}$ is an automorphism and $\sigma$ can be extended uniquely to a section of $\text{Aut}_{\mathcal{X}/\Delta}\to \Delta$. It follows that $\text{Aut}_{\mathcal{X}/\Delta}\to \Delta$ is proper.

Write $\text{Aut}_{\mathcal{X}/\Delta}=Y\sqcup Z$ where $Y$ is the union of the $1$-dimensional components of $\text{Aut}_{\mathcal{X}/\Delta}$ and $Z$ consists of isolated points. Then the restriction $Y\to \Delta$ of $\text{Aut}_{\mathcal{X}/\Delta}\to \Delta$ to $Y$ is flat by \cite[Proposition III.9.7]{Hartshorne1977}. Moreover, 
$Y\to\Delta$ is étale. Indeed, by
\cite[Exercise III.10.3]{Hartshorne1977} it suffices to show that $Y\to\Delta$ is unramified, but this is clear 
since $\text{Aut}(\mathcal{X}_t)$ is reduced (because it is a complex group scheme) and finite (because $\mathcal{X}_t$ is a stable curve) for every $t\in\Delta$.
In particular, the cardinality $\# Y_t$ of the fibers of $Y\to \Delta$ is constant for every $t\in \Delta$   (cf. \cite[Corollary to Proposition 3.13]{Fischer}). The semicontinuity of the map sending $t\in\Delta$ to the order
$|\text{Aut}(\mathcal{X}_t)|$ of the group of automorphisms of $\mathcal{X}_t$ follows taking into account that:
\begin{enumerate}
 \item[i)] $|\text{Aut}(\mathcal{X}_t)|=\# Y_t$ if $Z$ is disjoint from the fiber of $\text{Aut}_{\mathcal{X}/\Delta}\to \Delta$ over $t\in \Delta$;
 \item[ii)] $|\text{Aut}(\mathcal{X}_t)|>\# Y_t$ otherwise.
\end{enumerate}
\end{proof}

\section{Simply connected algebraic surfaces.}\label{SectionSimplyConnected}

In this section we gather some results about simply connected algebraic surfaces that will be needed to prove 
Theorem \ref{AutomorphismsEvenHorikawa}. They will come up as corollaries of the following:

\begin{theorem}\label{Lefschetz}
 Let $S$ be a smooth and simply connected algebraic surface and $\Lambda$ a very ample linear system on $S$ such that the general 
 curve in $\Lambda$ has genus $\geq 3$ and is non-hyperelliptic. 
 Then there exists a dense open subset of  $\Lambda$ consisting of curves without non-trivial automorphisms.
\end{theorem}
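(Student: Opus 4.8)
The plan is to first use the very ampleness of $\Lambda$ to realize the members of $\Lambda$ as the hyperplane sections of the embedding $S\hookrightarrow \mathbb{P}^N$, $N=\dim\Lambda$, and to isolate the open dense subset $\Lambda^\circ\subseteq\Lambda$ parametrizing smooth curves that are non-hyperelliptic of genus $\geq 3$; this is nonempty by hypothesis. Over $\Lambda^\circ$ the relative automorphism scheme $\text{Aut}_{\mathcal{C}/\Lambda^\circ}$ is finite and unramified, exactly as in the proof of Theorem \ref{SemicontResult}, so the map sending a curve $C$ to $|\text{Aut}(C)|$ is upper semicontinuous. Hence the locus $U=\{C\in\Lambda^\circ: \text{Aut}(C)=\{1\}\}$ is open, and since $\Lambda$ is irreducible it suffices to prove that $U\neq\emptyset$, i.e. that the \emph{general} member of $\Lambda$ has no non-trivial automorphism.

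I would prove this last statement by contradiction, via a monodromy argument that explains the name of the theorem. If the general curve had a non-trivial automorphism, then the complement of the identity section in $\text{Aut}_{\mathcal{C}/\Lambda^\circ}$ would dominate $\Lambda^\circ$; choosing a component that dominates and shrinking $\Lambda^\circ$, one obtains a finite étale cover $T\to\Lambda^\circ$ carrying a tautological non-trivial automorphism $\Phi$ of the pulled-back family $\mathcal{C}_T\to T$. Then $\Phi$ induces a flat endomorphism $\Phi^*$ of the local system $R^1\pi_*\mathbb{C}$, so its fibre at a base point commutes with the entire monodromy action of $\pi_1(T)$ on $H^1(C,\mathbb{C})$.

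The key geometric input is that, because $S$ is simply connected, $H^1(S,\mathbb{Q})=0$, so by the global invariant cycle theorem the monodromy of a Lefschetz pencil inside $\Lambda$ has no non-zero invariants on $H^1(C,\mathbb{Q})$; that is, the whole of $H^1(C)$ is vanishing cohomology, on which the Lefschetz monodromy is known to act irreducibly, indeed with Zariski closure the full symplectic group. Since $T\to\Lambda^\circ$ is finite, $\pi_1(T)$ maps to a finite-index subgroup of the monodromy group and therefore still acts absolutely irreducibly on $H^1(C,\mathbb{C})$. By Schur's lemma $\Phi^*$ acts as a scalar on $H^1(C,\mathbb{C})$, hence on $H^0(C,K_C)=H^{1,0}(C)$. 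A scalar on $H^0(C,K_C)$ induces the identity on the canonical image of $C$ in $\mathbb{P}^{g-1}$, and since $C$ is non-hyperelliptic of genus $\geq 3$ its canonical map is an embedding, forcing $\Phi=\text{id}$ and contradicting that $\Phi$ is non-trivial.

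I expect the main obstacle to be this monodromy step: one must set up the Lefschetz pencil and the universal automorphism $\Phi$ compatibly, check that $\Phi^*$ is genuinely a morphism of local systems, and justify both the irreducibility of the monodromy representation on the full $H^1(C,\mathbb{C})$ (this is where simple connectivity is used, to kill the invariant part) and its persistence after restriction to the finite-index subgroup $\pi_1(T)$. The remaining points, namely the semicontinuity, the openness of $U$, and the reduction to non-emptiness, are then routine given Theorem \ref{SemicontResult}.
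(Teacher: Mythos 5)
Your proposal is essentially correct, but it is doing far more work than the paper does: the paper disposes of the whole statement by quoting \cite[10.6.18]{RMFEM}, which says precisely that all but finitely many curves in a general pencil inside $\Lambda$ have no non-trivial automorphisms, and then concludes for $\Lambda$ itself. What you have written is, in substance, a proof of that cited lemma: the reduction to non-emptiness of $U$ via semicontinuity, the passage to a finite \'etale cover $T\to\Lambda^{\circ}$ carrying a tautological fibrewise automorphism $\Phi$, the observation that $H^{1}(S,\mathbb{Q})=0$ forces all of $H^{1}(C)$ to be vanishing cohomology, Schur's lemma, and the fact that a non-hyperelliptic curve of genus $\geq 3$ is canonically embedded so that an automorphism acting by a scalar on $H^{0}(K_{C})$ is the identity. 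All of these steps are sound, and your route has the merit of making the role of simple connectivity completely explicit rather than burying it in a reference.

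One step, however, is stated with a non sequitur and must be repaired exactly as your parenthetical remark suggests. Irreducibility of the Lefschetz monodromy representation does \emph{not} by itself pass to finite-index subgroups (a dihedral group acts irreducibly on the plane while its rotation subgroup does not act absolutely irreducibly), so the clause ``$\pi_{1}(T)$ maps to a finite-index subgroup \ldots and therefore still acts absolutely irreducibly'' is not a valid inference as written. You genuinely need the big-monodromy statement: since the vanishing cohomology is all of $H^{1}(C,\mathbb{Q})\neq 0$ and Picard--Lefschetz transvections have infinite order, the Zariski closure of the monodromy group is the full (connected) symplectic group $\mathrm{Sp}_{2g}$; a finite-index subgroup of a Zariski-dense subgroup of a connected group is again Zariski dense, whence absolute irreducibility of the $\pi_{1}(T)$-action. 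With that substitution (and the harmless remark that the scalar $\lambda$ is then $\pm 1$, both of which act trivially on the canonical image), your argument is complete and gives a self-contained replacement for the citation.
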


\begin{proof}
 By \cite[10.6.18]{RMFEM} all but at most finitely many of the curves in a general $1$-dimensional linear subspace of $\Lambda$ have no non-trivial automorphisms. Therefore a general element of $\Lambda$ has no non-trivial automorphism.
\end{proof}

As a consequence we obtain the following (see Remark \ref{AutXD}):

\begin{corollary}\label{Fe}
 Let $\mathbb{F}_e$ be the Hirzebruch surface with negative section $\Delta_0$
 of self-intersection $(-e)$ and fiber $F$. Then a general member $D$ of the linear system $|a\Delta_0+bF|$ with  
 $a>2, b>\text{max}\{ae, (a-1)e+2\}$ satisfies $\text{Aut}(\mathbb{F}_e,D)=\{1\}$.
\end{corollary}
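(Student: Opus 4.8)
The plan is to apply Theorem \ref{Lefschetz} with $S=\mathbb{F}_e$ (which is smooth, rational, and hence simply connected) and $\Lambda=|a\Delta_0+bF|$, obtaining a general member $D$ with $\text{Aut}(D)=\{1\}$, and then to upgrade this to $\text{Aut}(\mathbb{F}_e,D)=\{1\}$ via the restriction homomorphism $\text{Aut}(\mathbb{F}_e,D)\to\text{Aut}(D)$. The first task is to check the hypotheses of Theorem \ref{Lefschetz}. Since $a\geq 1$ and $b>ae$, the system $|a\Delta_0+bF|$ is very ample by the standard very-ampleness criterion on Hirzebruch surfaces, so it only remains to verify that the general member has genus $\geq 3$ and is non-hyperelliptic.

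For the non-hyperellipticity I would argue through the adjoint system. From $K_{\mathbb{F}_e}\equiv -2\Delta_0-(e+2)F$ one computes $K_{\mathbb{F}_e}+D\equiv(a-2)\Delta_0+(b-e-2)F$, and the hypotheses $a>2$ and $b>(a-1)e+2$ are precisely the inequalities $a-2\geq 1$ and $b-e-2>(a-2)e$ that make $K_{\mathbb{F}_e}+D$ very ample. For a smooth $D\in\Lambda$ the restriction sequence
\[
0\to \mathcal{O}_{\mathbb{F}_e}(K_{\mathbb{F}_e})\to \mathcal{O}_{\mathbb{F}_e}(K_{\mathbb{F}_e}+D)\to \mathcal{O}_D(K_D)\to 0,
\]
together with $H^1(\mathbb{F}_e,K_{\mathbb{F}_e})=0$ (Serre duality and $q(\mathbb{F}_e)=0$), shows that $H^0(\mathbb{F}_e,K_{\mathbb{F}_e}+D)\to H^0(D,K_D)$ is surjective. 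Hence $|K_D|$ is the restriction of the very ample system $|K_{\mathbb{F}_e}+D|$ and is itself very ample, so the canonical map of $D$ is an embedding, which forces $D$ to be non-hyperelliptic of genus $\geq 3$. Theorem \ref{Lefschetz} then yields $\text{Aut}(D)=\{1\}$ for general $D\in\Lambda$.

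The heart of the proof is the final implication: since the only element of $\text{Aut}(D)$ is the identity, every $h\in\text{Aut}(\mathbb{F}_e,D)$ fixes $D$ pointwise, and I must deduce $h=\text{id}$. This step is genuinely needed and is not a formal consequence of ampleness — on $\mathbb{P}^2$ an elation fixes an ample line pointwise without being trivial — and the hypothesis $a>2$ is exactly what rules out such behaviour. For $e\geq 1$ any $h\in\text{Aut}(\mathbb{F}_e)$ preserves the unique ruling $p\colon\mathbb{F}_e\to\mathbb{P}^1$ and so induces $\bar h\in\text{Aut}(\mathbb{P}^1)$. Because $p|_D\colon D\to\mathbb{P}^1$ is surjective ($D\cdot F=a>0$) and $h$ fixes $D$ pointwise, $\bar h$ fixes a dense subset of $\mathbb{P}^1$ and is therefore trivial, so $h$ preserves every fibre $F_t\cong\mathbb{P}^1$. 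For general $t$ the intersection $D\cap F_t$ consists of $D\cdot F=a\geq 3$ distinct points, all fixed by $h$; an automorphism of $\mathbb{P}^1$ fixing three points is the identity, so $h|_{F_t}=\text{id}$ for general $t$ and thus $h=\text{id}$ on a dense open set, whence $h=\text{id}$.

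The only case left to dispatch is $e=0$, where $h$ might a priori interchange the two rulings; this can happen only if the class $a\Delta_0+bF$ is swap-invariant, i.e. $a=b$, in which case $h^2$ preserves both rulings and fixes $D$ pointwise, so $h^2=\text{id}$ by the previous paragraph. A ruling-interchanging involution has fixed locus a curve of class $\Delta_0+F$, which cannot contain the irreducible curve $D$ of class $a(\Delta_0+F)$ with $a\geq 3$, a contradiction; this reduces everything to the ruling-preserving situation. Combining the steps, the restriction $\text{Aut}(\mathbb{F}_e,D)\hookrightarrow\text{Aut}(D)=\{1\}$ gives $\text{Aut}(\mathbb{F}_e,D)=\{1\}$ for general $D$. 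I expect the main obstacle to be exactly this passage from $\text{Aut}(D)$ to $\text{Aut}(\mathbb{F}_e,D)$; the fibrewise three-fixed-points argument, made available by $a>2$, is the decisive ingredient, while the genus and very-ampleness verifications are routine once the adjoint class is computed.
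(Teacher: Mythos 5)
Your proposal is correct and follows essentially the same route as the paper: verify very ampleness, use adjunction to see that $K_D$ is very ample (hence $D$ non-hyperelliptic of genus $\geq 3$), apply Theorem \ref{Lefschetz}, and then exploit the $a>2$ points of $D$ fixed on a general fiber to kill any $h\in\text{Aut}(\mathbb{F}_e,D)$. The only (immaterial) difference is in showing $h$ preserves the general fiber: the paper classifies irreducible self-intersection-$0$ curves and uses $F\cdot h(F)\geq a>2$, while you pass to the induced automorphism of the base of the ruling and treat the $e=0$ ruling-swap case by a fixed-locus argument; both work.
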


\begin{proof}
 First of all, the linear system $|a\Delta_0+bF|$ is very ample by \cite[Corollary V.2.18]{Hartshorne1977}. On the other hand, by the 
 Adjunction Formula
 \begin{equation*}
  K_D\equiv (K_{\mathbb{F}_e}+D)|_{D}\equiv ((a-2)\Delta_0+(b-2-e)F)|_D.
 \end{equation*}
Since $(a-2)\Delta_0+(b-2-e)F$ is very ample again by \cite[Corollary V.2.18]{Hartshorne1977}, 
we have that $K_D$ is very ample and therefore $D$ is a non-hyperelliptic
 curve of genus greater than $2$. Therefore we can apply Theorem \ref{Lefschetz} 
 to the smooth and simply connected surface $\mathbb{F}_e$ and the linear system 
 $|a\Delta_0+bF|$ to conclude that $D$ has no non-trivial automorphisms.
 
  Let us consider $h\in \text{Aut}(\mathbb{F}_e, D)$. We are going to show that it is necessarily the trivial automorphism of $\mathbb{F}_e$. Firstly, $h|_D$ is an automorphism of $D$ and therefore it is trivial. Hence $D$ belongs to the fixed locus of $h$ and a general fiber $F$ of $\mathbb{F}_e$ has at least $FD=a$ points fixed by $h$. Then $F$ and $h(F)$ are irreducible curves such that  $F\cdot h(F)\geq FD=a>2$ and $0=F^2=h(F)^2$. I claim that this yields $h(F)=F$. Indeed, if $e\neq 0$ the only irreducible self-intersection $0$ curves of $\mathbb{F}_e$ are the elements of $|F|$. If $e=0$ the only irreducible self-intersection $0$ curves of $\mathbb{F}_e$ are the elements of the pencils $|F|$ and $|\Delta_0|$. In both cases the fact that $F$ contains $a>2$ fixed points implies $h(F)=F$. It follows that 
 $h|_F$ is an automorphism of $F\simeq \mathbb{P}^1$ with at least $a>2$ fixed points. We conclude that $h|_F$ is the trivial automorphism of $F$. As a consequence $h$ is trivial in an open and dense subset of $\mathbb{F}_e$ and it must 
 be the trivial automorphism.  
\end{proof}

\begin{corollary}\label{F2k+2}
Let $\mathbb{F}_{2k+2}$ be the Hirzebruch surface with negative section $\Delta_0$
 of self-intersection $-(2k+2)$ and fiber $F$.
If $k\geq 1$ then a general member $D$ of the linear system $|\mathcal{O}_{\mathbb{F}_{2k+2}}(5\Delta_0+10(k+1)F)|$  
 satisfies $\text{Aut}(\mathbb{F}_{2k+2},D)=\{1\}$. In particular $\text{Aut}(\mathbb{F}_{2k+2},\Delta_0+D)=\{1\}$
\end{corollary}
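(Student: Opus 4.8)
The strategy is to deduce this from Corollary \ref{Fe} by checking that the numerical hypotheses there are met, and then to handle the extra divisor $\Delta_0$ separately. First I would set $e=2k+2$, $a=5$ and $b=10(k+1)$ and verify the inequalities $a>2$ and $b>\max\{ae,(a-1)e+2\}$ required by Corollary \ref{Fe}. Since $ae=5(2k+2)=10k+10=10(k+1)=b$, the condition $b>ae$ \emph{fails with equality}, so I cannot invoke Corollary \ref{Fe} verbatim. This is the main obstacle: the linear system $|5\Delta_0+10(k+1)F|$ sits exactly on the boundary of the very-ampleness range used in the previous corollary, so I must redo the geometric argument rather than cite it.

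The plan is therefore to mimic the proof of Corollary \ref{Fe} directly. First I would establish that $|5\Delta_0+10(k+1)F|$ is very ample and that a general member $D$ is a smooth, non-hyperelliptic curve of genus $\geq 3$, using the Adjunction Formula to compute $K_D\equiv(3\Delta_0+(10(k+1)-2-(2k+2))F)|_D=(3\Delta_0+(8k+6)F)|_D$ and checking very ampleness of this restriction via \cite[Corollary V.2.18]{Hartshorne1977} (here $3>2$ and $8k+6$ comfortably exceeds the relevant bounds since $k\geq 1$). Applying Theorem \ref{Lefschetz} to the smooth simply connected surface $\mathbb{F}_{2k+2}$ then gives that the general $D$ has no non-trivial automorphisms. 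For any $h\in\text{Aut}(\mathbb{F}_{2k+2},D)$, the restriction $h|_D$ is trivial, so $D$ lies in the fixed locus of $h$; since a general fiber $F$ meets $D$ in $F\cdot D=5>2$ points and (because $e=2k+2\neq 0$) the only irreducible self-intersection-$0$ curves are the fibers, I conclude $h(F)=F$ and then that $h|_F$ fixes more than two points of $F\simeq\mathbb{P}^1$, forcing $h|_F$ trivial. Hence $h$ is trivial on a dense open set and thus trivial, giving $\text{Aut}(\mathbb{F}_{2k+2},D)=\{1\}$.

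For the final clause, I would argue that every automorphism fixing $\Delta_0+D$ setwise must in fact fix $D$: since $\Delta_0$ is the unique irreducible curve of negative self-intersection on $\mathbb{F}_{2k+2}$, any $h\in\text{Aut}(\mathbb{F}_{2k+2})$ satisfies $h(\Delta_0)=\Delta_0$, so preserving the sum $\Delta_0+D$ is equivalent to preserving $D$. Therefore $\text{Aut}(\mathbb{F}_{2k+2},\Delta_0+D)\subseteq\text{Aut}(\mathbb{F}_{2k+2},D)=\{1\}$, which yields the claim. I expect the bulk of the work to be the routine very-ampleness and genus bookkeeping in the second paragraph; the only genuinely delicate point is recognizing that the boundary-case equality $b=ae$ rules out a direct citation of Corollary \ref{Fe} and necessitates repeating the fixed-point argument in this specific setting.
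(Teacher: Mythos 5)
There is a genuine gap at the heart of your plan. You correctly notice that for $a=5$, $e=2k+2$, $b=10(k+1)$ one has $b=ae$, so Corollary \ref{Fe} cannot be cited; but you then propose to ``establish that $|5\Delta_0+10(k+1)F|$ is very ample'' and feed it into Theorem \ref{Lefschetz}. This cannot be done: by \cite[Corollary V.2.18]{Hartshorne1977} very ampleness of $a\Delta_0+bF$ is \emph{equivalent} to $a>0$ and $b>ae$, and here $b=ae$ exactly. Geometrically, $D\cdot\Delta_0=(5\Delta_0+10(k+1)F)\cdot\Delta_0=0$, so the linear system contracts the negative section $\Delta_0$ to the vertex of a cone and is not even ample. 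Hence Theorem \ref{Lefschetz}, whose hypotheses include very ampleness of the linear system, does not apply to $|5\Delta_0+10(k+1)F|$, and the first paragraph of your fixed-point argument (which needs to know that the general $D$ has no non-trivial automorphisms) has no starting point. The adjunction computation showing $K_D$ very ample and the final reduction $\text{Aut}(\mathbb{F}_{2k+2},\Delta_0+D)\subseteq\text{Aut}(\mathbb{F}_{2k+2},D)$ via uniqueness of the negative section are both fine, but they do not repair this.

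The paper's proof goes around the obstruction by degeneration, and this is the missing idea. One takes $B'$ general in $|4\Delta_0+10(k+1)F|$, which \emph{is} very ample (here $b=10k+10>8k+8=ae$), so Theorem \ref{Lefschetz} applies and $B'$ has no non-trivial automorphisms; then $B=\Delta_0+B'\in|5\Delta_0+10(k+1)F|$ is a stable nodal curve with trivial automorphism group (any automorphism must preserve each component and fixes the $2(k+1)\geq 4$ nodes, hence is trivial on $\Delta_0\simeq\mathbb{P}^1$ as well). Putting $B$ as the special fiber of a family whose other fibers are smooth members of $|5\Delta_0+10(k+1)F|$ and invoking the semicontinuity result for families of stable curves (Theorem \ref{SemicontResult}) together with \cite[Corollary 4.5]{FanPar} yields that the \emph{general} member has trivial automorphism group; your fixed-point argument (or the one from Corollary \ref{Fe}) then upgrades this to $\text{Aut}(\mathbb{F}_{2k+2},D)=\{1\}$. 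This degeneration step is precisely the reason Section \ref{AutoSection} exists in the paper, and without it (or some substitute for the Lefschetz-pencil argument valid for non-very-ample systems) your proposal does not close.
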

\begin{proof}
Let us denote by $|\text{Aut}(C)|$ the order of the group of automorphisms of a member $C$ of the linear system $|5\Delta_0+10(k+1)F|$.
By \cite[Corollary 4.5]{FanPar} the map $C\mapsto |\text{Aut}(C)|$
 is an upper semicontinuous function on the subset of $|5\Delta_0+10(k+1)F|$ consisting of smooth divisors.
Hence, there will be an open and dense subset of curves in $|5\Delta_0+10(k+1)F|$ without non-trivial automorphisms as long as the set of smooth curves with this property is not empty. Moreover, arguing like we did in the proof of Corollary \ref{Fe} we can show that a divisor $C\in|5\Delta_0+10(k+1)F|$ without non-trivial automorphisms satisfies $\text{Aut}(\mathbb{F}_{2k+2},C)=\{1\}$. Therefore the result will follow if we find a smooth divisor $C\in|5\Delta_0+10(k+1)F|$ without  non-trivial automorphisms.

Now, by Theorem \ref{Lefschetz} a general curve $B'\in|4\Delta_0+10(k+1)F|$ has no non-trivial automorphisms. We can assume $B'$ to be smooth and irreducible and to intersect $\Delta_0$ transversally in $2(k+1)$ different points. Then it is clear that $B=\Delta_0+B'\in |5\Delta_0+10(k+1)F|$ has trivial group of automorphisms. 
Denote by $\Delta\subset\mathbb{C}$ the unit disc.
We can consider a family $p\colon \mathcal{X}\to \Delta$ of curves of $|5\Delta_0+10(k+1)F|$ such that
 $B$ is the central fiber, the rest of fibers are smooth and
 $\mathcal{X}$ is smooth. By Theorem \ref{SemicontResult}
 the cardinality of the fiber of 
 $\text{Aut}_{\mathcal{X}/\Delta}\to \Delta$
 is an upper semicontinuous function. Hence, the general fiber of $p$ has trivial group of automorphisms because $B$ does.
 \end{proof}

\begin{corollary}\label{P2}
 A general member $D$ of the linear system $|\mathcal{O}_{\mathbb{P}^2}(d)|$ with  
 $d\geq 4$ satisfies $\text{Aut}(\mathbb{P}^2,D)=\{1\}$.
\end{corollary}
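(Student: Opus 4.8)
The plan is to mirror the proof of Corollary \ref{Fe}, with the lines of $\mathbb{P}^2$ playing the role that the fibers $F$ played there. First I would note that $|\mathcal{O}_{\mathbb{P}^2}(d)|$ is very ample for every $d\geq 1$, so a general member $D$ is a smooth irreducible plane curve of degree $d$. By the Adjunction Formula,
\begin{equation*}
 K_D\equiv (K_{\mathbb{P}^2}+D)|_D\equiv \mathcal{O}_{\mathbb{P}^2}(d-3)|_D,
\end{equation*}
and since $\mathcal{O}_{\mathbb{P}^2}(d-3)$ is very ample for $d\geq 4$, its restriction $K_D$ is very ample as well. Hence $D$ is a non-hyperelliptic curve of genus $\binom{d-1}{2}\geq 3$, and Theorem \ref{Lefschetz} applies to the smooth simply connected surface $\mathbb{P}^2$ and the linear system $|\mathcal{O}_{\mathbb{P}^2}(d)|$. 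This gives a dense open subset of $|\mathcal{O}_{\mathbb{P}^2}(d)|$ whose members have no non-trivial automorphisms.

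It then remains to deduce $\text{Aut}(\mathbb{P}^2,D)=\{1\}$ for such a $D$, arguing exactly as in Corollary \ref{Fe}. Let $h\in\text{Aut}(\mathbb{P}^2,D)$. Then $h|_D$ is an automorphism of $D$, hence trivial, so $D$ lies in the fixed locus of $h$. A general line $\ell$ meets $D$ transversally in $\ell D=d$ distinct points, all fixed by $h$. Since $h\in\text{Aut}(\mathbb{P}^2)$ sends lines to lines, the line $h(\ell)$ passes through these same $d\geq 4>1$ points; as two distinct lines meet in a single point, this forces $h(\ell)=\ell$. Consequently $h|_\ell$ is an automorphism of $\ell\simeq\mathbb{P}^1$ with at least $d>2$ fixed points and is therefore trivial, so $h$ is the identity on a dense open subset of $\mathbb{P}^2$ and must be the trivial automorphism.

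I do not expect any serious obstacle here, since the structure is identical to that of Corollary \ref{Fe}. The only inputs that require care are the two standard facts invoked above: that a smooth plane curve of degree $\geq 4$ is non-hyperelliptic (guaranteed by the very ampleness of $K_D=\mathcal{O}_{\mathbb{P}^2}(d-3)|_D$, which is what lets us apply Theorem \ref{Lefschetz}), and that a general line $\ell$ can be chosen to meet $D$ in $d$ distinct points, so that the incidence argument forcing $h(\ell)=\ell$ goes through. Everything else is the same bookkeeping as in the Hirzebruch case.
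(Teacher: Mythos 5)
Your proof is correct and is precisely the argument the paper intends: its proof of this corollary consists of the single sentence that it is analogous to the proof of Corollary \ref{Fe}, and you have carried out that analogy faithfully (adjunction giving very ampleness of $K_D$, Theorem \ref{Lefschetz}, and the incidence argument with lines in place of fibers). No gaps.
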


\begin{proof}
 The proof of this result is analogous to the proof of Corollary \ref{Fe}.
\end{proof}

\begin{corollary}\label{ParticularAutGroups}
 Let $(S,D)$ be one of the following pairs:
 \begin{enumerate}
  \item[i)] $S$ is the Hirzebruch surface $\mathbb{F}_e$ with $e\geq 0$ and $D$ is a general member of the linear system $|a\Delta_0+bF|$ with even $a\geq 6$ and even  $b>\text{max}\{ae$, $ (a-1)e+2, (a-2)e+4\}$.
  \item[ii)] $S$ is the Hirzebruch surface $\mathbb{F}_{2k+2}$ with $k\geq 2$ and $D$ is a general member of the linear system $|\mathcal{O}_{\mathbb{F}_{2k+2}}(6\Delta_0+10(k+1)F)|$.
  \item[iii)] $S=\mathbb{P}^2$ and $D$ is a general member of the linear system $|\mathcal{O}_{\mathbb{P}^2}(d)|$ with even $d\geq 8$.
 \end{enumerate}
Then $\text{Aut}(X)\simeq\mathbb{Z}_2$ where $X\to S$ is a $\mathbb{Z}_2$-cover of $S$ branched along $D$.
\end{corollary}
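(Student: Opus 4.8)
The plan is to exhibit each cover $X\to S$ as a degree $2$ simple cyclic cover and to verify the hypotheses of Theorem \ref{AutomorphismsSimpleCyclicCovers} with $n=2$. In every case the coefficients of $D$ are even, so $L:=\tfrac{1}{2}D$ is an honest divisor class with $D\in|2L|$, and $\{L,D\}$ is the building data of a $\mathbb{Z}_2$-cover $f\colon X\to S$ branched along $D$. Since $n=2$, the chain of vanishing conditions $h^0(K_S)=\cdots=h^0(K_S+(n-2)L)=0$ collapses to the single requirement $h^0(K_S)=0$, which holds because $S$ is rational. It therefore remains to check two things: that $|K_S+L|$ induces a birational map, and that $\text{Aut}(S,D)=\{1\}$ for $D$ general; the conclusion $\text{Aut}(X)\simeq\mathbb{Z}_2$ is then immediate from part iii) of Theorem \ref{AutomorphismsSimpleCyclicCovers}.

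First I would settle the birationality of $|K_S+L|$ by computing $K_S+L$ directly and invoking the very ampleness criterion \cite[Corollary V.2.18]{Hartshorne1977} (and the Veronese embedding for $\mathbb{P}^2$). In case i) one finds $K_S+L\equiv(\tfrac{a}{2}-2)\Delta_0+(\tfrac{b}{2}-e-2)F$, which is very ample precisely because $a\geq 6$ forces $\tfrac{a}{2}-2>0$ while the bound $b>(a-2)e+4$ forces $\tfrac{b}{2}-e-2>(\tfrac{a}{2}-2)e$. In case ii) one gets $K_S+L\equiv\Delta_0+(3k+1)F$ on $\mathbb{F}_{2k+2}$, which is very ample exactly when $3k+1>2k+2$, i.e. when $k\geq 2$ --- this is where that hypothesis enters. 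In case iii) one has $K_S+L\equiv\mathcal{O}_{\mathbb{P}^2}(\tfrac{d}{2}-3)$, very ample since $d\geq 8$. In each case very ampleness gives an embedding, hence a birational morphism onto the image.

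Next I would verify $\text{Aut}(S,D)=\{1\}$ together with the smoothness of the branch locus, which is where the three cases genuinely diverge. Cases i) and iii) are immediate: $|D|$ is very ample, so a general $D$ is smooth, and the hypotheses of Corollary \ref{Fe} (namely $a>2$ and $b>\max\{ae,(a-1)e+2\}$, both contained in the stated bounds) respectively of Corollary \ref{P2} (namely $d\geq 4$) yield $\text{Aut}(S,D)=\{1\}$. I expect case ii) to be the main obstacle, since there $|6\Delta_0+10(k+1)F|$ is not base-point-free: because $\Delta_0\cdot(6\Delta_0+10(k+1)F)=-2(k+1)<0$, the negative section $\Delta_0$ is a fixed component, so a general $D$ is reducible, of the form $D=\Delta_0+D'$ with $D'$ general in $|5\Delta_0+10(k+1)F|$. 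The key observation is that $\Delta_0\cdot D'=0$, so $D=\Delta_0\sqcup D'$ is a disjoint union of two smooth curves and hence a smooth divisor; this guarantees that $X$ is smooth and that Theorem \ref{AutomorphismsSimpleCyclicCovers} applies. Finally, Corollary \ref{F2k+2} was stated precisely so as to provide $\text{Aut}(\mathbb{F}_{2k+2},\Delta_0+D')=\{1\}$, supplying the last hypothesis. With all hypotheses in place, Theorem \ref{AutomorphismsSimpleCyclicCovers} iii) gives $\text{Aut}(X)=\langle\tau\rangle\simeq\mathbb{Z}_2$ in all three cases.
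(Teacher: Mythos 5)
Your proposal is correct and follows essentially the same route as the paper: both reduce to Theorem \ref{AutomorphismsSimpleCyclicCovers}.iii) after observing that $h^0(K_S)=0$, that $K_S+\tfrac{1}{2}D$ is very ample (hence $|K_S+L|$ is birational), and that $\text{Aut}(S,D)=\{1\}$ by Corollaries \ref{Fe}, \ref{F2k+2} and \ref{P2}. You merely spell out the very-ampleness computations and the fixed-component analysis of case ii) in more detail than the paper's two-line proof does.
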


\begin{proof}
We know that $\text{Aut}(S,D)=\{1\}$ by Corollary \ref{Fe}, Corollary \ref{F2k+2} or Corollary \ref{P2}
depending on whether we are in case i), ii) or iii) respectively. The result follows now from Theorem \ref{AutomorphismsSimpleCyclicCovers}.iii) taking into account that $h^0(K_S)=0$ and $K_S+\frac{1}{2}D$ is very ample.
\end{proof}

\section{Group of automorphisms of Horikawa surfaces.}\label{FinalSect}

The aim of this section is to prove Theorem \ref{AutomorphismsEvenHorikawa}. We will need the following:

\begin{lemma}\label{AutoLemma}
 Let $(K^2,\chi)$ be an admissible pair such that $K^2=2\chi-6$. Suppose there exists a smooth surface $X\in \mathfrak{M}_{K^2, \chi}$ whose group of automorphisms is isomorphic to $\mathbb{Z}_2$ and denote by $\mathfrak{M}$ the irreducible component  of $\mathfrak{M}_{K^2, \chi}$ containing $X$. Then the group of automorphisms of a general surface in  $\mathfrak{M}$ is isomorphic to $\mathbb{Z}_2$.
\end{lemma}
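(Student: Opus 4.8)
The plan is to show that the locus in $\mathfrak{M}$ where the group of automorphisms is exactly $\mathbb{Z}_2$ is open and dense, by establishing the upper semicontinuity of the order of the automorphism group and then invoking the irreducibility of $\mathfrak{M}$.

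First I would record the two elementary facts that frame the problem. On the one hand, since $K^2=2\chi-6$ forces $\chi\geq 4$, every surface parametrized by $\mathfrak{M}_{K^2,\chi}$ is a Horikawa surface and thus carries the canonical involution of Theorem \ref{StructureEvenHorikawa}; hence $|\text{Aut}(\cdot)|\geq 2$ along the whole component, with the value $2$ attained at the given surface $X$. On the other hand, for a minimal surface of general type the automorphism group agrees with that of its canonical model, so it is harmless to work with canonical models throughout. The statement therefore reduces to showing that the function assigning to a surface the order of its automorphism group is upper semicontinuous on $\mathfrak{M}$: once this is known, $\{|\text{Aut}|=2\}=\{|\text{Aut}|<3\}$ is open, is nonempty (it contains $X$), and hence is dense in the irreducible space $\mathfrak{M}$, and a general surface has automorphism group of order $2$ containing $\mathbb{Z}_2$, i.e. isomorphic to $\mathbb{Z}_2$.

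To make sense of semicontinuity I would replace $\mathfrak{M}$ by a parameter scheme. Using Gieseker's construction of $\mathfrak{M}_{K^2,\chi}$ as a quotient of a locally closed subscheme $H$ of a Hilbert scheme of pluricanonically embedded canonical models, I would choose an irreducible component $T$ of the preimage of $\mathfrak{M}$ in $H$ dominating $\mathfrak{M}$, together with the restriction $\mathcal{X}\to T$ of the universal family; a general point of $T$ maps to a general point of $\mathfrak{M}$, and some point $t_X$ satisfies $|\text{Aut}(\mathcal{X}_{t_X})|=2$. I would then consider the relative automorphism scheme $\mathcal{A}=\text{Aut}_{\mathcal{X}/T}\to T$, which is representable and quasi-finite because the fibers have ample canonical class and hence finite automorphism groups. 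The key point is that $\mathcal{A}\to T$ is finite and unramified: unramifiedness is immediate, since over a field of characteristic zero every finite group scheme is reduced, so each fiber $\text{Aut}(\mathcal{X}_t)$ is an étale $\kappa(t)$-algebra; finiteness amounts to properness. Granting this, $g_\ast\mathcal{O}_{\mathcal{A}}$ is coherent on $T$, and because the fibers are étale algebras its fiber dimension equals the number of geometric points of $\mathcal{A}_t$, namely $|\text{Aut}(\mathcal{X}_t)|$; the upper semicontinuity of fiber dimensions of coherent sheaves then gives exactly what is needed.

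The main obstacle is the properness of $\mathcal{A}\to T$, i.e. the valuative criterion that a relative automorphism defined over a punctured disc extends across the central fiber. This is precisely the surface analogue of the argument carried out for stable curves in Theorem \ref{SemicontResult}: given a one-parameter degeneration, one spreads the fiberwise automorphism to a birational self-map of the total space, resolves it by a minimal sequence of blow-ups, and then uses the relative nefness of the canonical class—here relative ampleness, which is even more favourable—to rule out the appearance of a noncontracted $(-1)$-curve, forcing the birational map to be a morphism and, by finiteness of the fibers, an automorphism. Once properness is in place the remaining steps are formal, and the lemma follows.
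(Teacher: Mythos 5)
Your overall strategy coincides with the paper's: combine upper semicontinuity of $Y\mapsto|\mathrm{Aut}(Y)|$ with the lower bound $|\mathrm{Aut}(Y)|\geq 2$ coming from the canonical involution (valid since $K^2=2\chi-6$ and $K^2\geq 1$ force $\chi\geq 4$, so Theorem \ref{StructureEvenHorikawa} applies), and conclude by irreducibility that $\{|\mathrm{Aut}|=2\}$ is open, nonempty and dense. The difference is that the paper disposes of the semicontinuity in one line by citing \cite[Corollary 4.5]{FanPar} (on the locus of smooth surfaces in $\mathfrak{M}$, which is open and contains $X$), whereas you attempt to re-derive it from scratch via the relative automorphism scheme over a chart of Gieseker's moduli space.

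Most of your re-derivation (representability, quasi-finiteness, unramifiedness in characteristic zero, and the fiber-counting via a coherent pushforward) is fine, but the properness step is where your argument would fail as written. You assert that extending an automorphism across the central fiber of a one-parameter family of surfaces is ``precisely the surface analogue'' of the stable-curves argument in Theorem \ref{SemicontResult}. It is not: there the total space is a smooth \emph{surface}, and the contradiction is extracted from the last irreducible $(-1)$-curve of a minimal resolution of indeterminacy. For a family of surfaces the total space is a threefold, where $(-1)$-curves and that resolution argument are not available, and birational self-maps are governed by a much subtler theory. The standard way to get properness of $\mathrm{Aut}_{\mathcal{X}/T}\to T$ for families with relatively ample canonical class is the Matsusaka--Mumford extension theorem (an isomorphism of generic fibers of two polarized non-ruled families extends to an isomorphism of special fibers), which is essentially what underlies \cite[Corollary 4.5]{FanPar}. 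So the gap is repairable simply by citing that result, as the paper does, but the properness argument you sketch would not go through.
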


\begin{proof}
Let us denote by $|\text{Aut}(Y)|$ the order of the group of automorphisms of a surface $Y\in \mathfrak{M}_{K^2, \chi}$.
  By \cite[Corollary 4.5]{FanPar} the map $Y\mapsto |\text{Aut}(Y)|$
 is an upper semicontinuous function on the subset of $\mathfrak{M}$ consisting of smooth surfaces. The result follows taking into account that the group of automorphisms of every Horikawa surface has a subgroup isomorphic to $\mathbb{Z}_2$ (see Section \ref{Intro}).
\end{proof}

We are ready to prove Theorem \ref{AutomorphismsEvenHorikawa}.

\begin{proof}[Proof of Theorem \ref{AutomorphismsEvenHorikawa}]
By Lemma \ref{AutoLemma} finding a smooth surface with group of automorphisms isomorphic to $\mathbb{Z}_2$ in each irreducible component of 
 $\mathfrak{M}_{K^2, \chi}$ suffices to prove Theorem \ref{AutomorphismsEvenHorikawa}. The rest of the proof is devoted to construct such surfaces distinguishing the case $\chi$ even from the case $\chi$ odd. Notice that if $\chi$ is even then $K^2\notin 8\cdot \mathbb{Z}$ and $\mathfrak{M}_{K^2, \chi}$ has a unique irreducible component by Theorem \ref{DefClass}. If $\chi$ is odd it may happen that $K^2\in 8\cdot \mathbb{Z}$ and $\mathfrak{M}_{K^2, \chi}$ has two irreducible components by Theorem \ref{DefClass}.

 Let us begin by assuming that $\chi$ is odd. We denote by $\Delta_0$ and $F$ the two classes of fibers of 
$\mathbb{P}^1\times \mathbb{P}^1$ and
 we consider a $\mathbb{Z}_2$-cover $f\colon X\to \mathbb{P}^1\times \mathbb{P}^1$ whose branch locus $B$
 is a general member of the linear system $|6\Delta_0+(\chi+1)F|$. The formulas for simple cyclic covers (see Proposition \ref{SimpleCyclicInvariants}) yield $K^2_X=K^2$ and $\chi(\mathcal{O}_X)=\chi$. Moreover, $K_X$ is ample because it is the pullback via $f$ of the ample divisor $\Delta_0+\frac{\chi-3}{2}F$. Hence, $X\in\mathfrak{M}_{K^2, \chi}$ and 
 $\text{Aut}(X)=\mathbb{Z}_2$ by Corollary \ref{ParticularAutGroups}.
 Furthermore, if 
 $K^2=8k$ for some integer $k\geq 1$ the surface $X$ belongs to  $\mathfrak{M}^I_{8k, 4k+3}$ (see Theorem \ref{DefClass})
 because its canonical image is $\mathbb{P}^1\times \mathbb{P}^1$. Indeed, since $\Delta_0+\frac{\chi-3}{2}F$ is not only ample but very ample and $h^0(K_{\mathbb{P}^1\times \mathbb{P}^1})=0$ the canonical image of $X$ is $\mathbb{P}^1\times \mathbb{P}^1$ by Remark \ref{CanonicalImageSimpleCyclicCover}. 
 
 In order to construct a surface with group of automorphisms isomorphic to $\mathbb{Z}_2$ in  $\mathfrak{M}^{II}_{8k, 4k+3}$
 when $k\geq 2$ it suffices to consider a
 $\mathbb{Z}_2$-cover $f\colon X\to \mathbb{F}_{2k+2}$ of the Hirzebruch surface $\mathbb{F}_{2k+2}$
 with negative section $\Delta_0$ of self-intersection $-(2k+2)$ and fiber $F$.
 The branch locus $B$
 consists of $\Delta_0$ plus a general element of the linear system $|5\Delta_0+10(k+1)F|$.
 The formulas for simple cyclic covers (see Proposition \ref{SimpleCyclicInvariants}) yield $K^2_X=8k$ and $\chi(\mathcal{O}_X)=4k+3$. Moreover, $K_X$ is ample because it is the pullback via $f$ of the ample divisor $\Delta_0+(3k+1)F$. Hence, $X\in\mathfrak{M}_{8k, 4k+3}$ and 
 $\text{Aut}(X)=\mathbb{Z}_2$ by Corollary \ref{ParticularAutGroups}.
 The surface $X$ belongs to $\mathfrak{M}^{II}_{8k, 4k+3}$ (see Theorem \ref{DefClass})
 because its canonical image is 
 $\mathbb{F}_{2k+2}$. Indeed, since $\Delta_0+(3k+1)F$ is not only ample but very ample and $h^0(K_{\mathbb{F}_{2k+2}})=0$ the canonical image of $X$ is $\mathbb{F}_{2k+2}$ by Remark \ref{CanonicalImageSimpleCyclicCover}. 
 
  If $k=1$ in the previous example, the canonical divisor of $X$ is not ample. Therefore we need to construct another surface for this case.
 Let us consider a $\mathbb{Z}_2$-cover $f\colon X\to \mathbb{P}^2$ whose branch locus $B$
 is a general member of the linear system $|\mathcal{O}_{\mathbb{P}^2}(10)|$. 
 The formulas for simple cyclic covers (see Proposition \ref{SimpleCyclicInvariants}) yield $K^2_X=8$ and $\chi(\mathcal{O}_X)=7$. Moreover, $K_X$ is ample because it is the pullback via $f$ of the ample line bundle $\mathcal{O}_{\mathbb{P}^2}(2)$. Hence, $X\in\mathfrak{M}_{8, 7}$ and 
 $\text{Aut}(X)=\mathbb{Z}_2$ by Corollary \ref{ParticularAutGroups}.
 The surface $X$ belongs to $\mathfrak{M}^{II}_{8, 7}$ (see Theorem \ref{DefClass})
 because its canonical image is 
 $\mathbb{P}^2$. Indeed, since $\mathcal{O}_{\mathbb{P}^2}(2)$ is not only ample but very ample and $h^0(K_{\mathbb{P}^2})=0$ the canonical image of $X$ is $\mathbb{P}^2$ by Remark \ref{CanonicalImageSimpleCyclicCover}.

 Let us assume now that $\chi>4$ is even. We consider a $\mathbb{Z}_2$-cover $f\colon X\to \mathbb{F}_1$
 of the Hirzebruch surface $\mathbb{F}_{1}$
 with negative section $\Delta_0$ of self-intersection $(-1)$ and fiber $F$. The branch locus $B$
 is a general element of the linear system $|6\Delta_0+(\chi+4)F|$.
 The formulas for simple cyclic covers (see Proposition \ref{SimpleCyclicInvariants}) yield $K^2_X=K^2$ and $\chi(\mathcal{O}_X)=\chi$. Moreover, $K_X$ is ample because it is the pullback via $f$ of the ample divisor $\Delta_0+\frac{\chi-2}{2}F$. Hence, $X\in\mathfrak{M}_{K^2, \chi}$ and 
 $\text{Aut}(X)=\mathbb{Z}_2$ by Corollary \ref{ParticularAutGroups}.

 If $\chi=4$ in the previous example, the canonical divisor of $X$ is not ample. Therefore we need to construct another surface for this case.
 Let us consider a $\mathbb{Z}_2$-cover $f\colon X\to \mathbb{P}^2$ whose branch locus $B$
 is a general element of the linear system $|\mathcal{O}_{\mathbb{P}^2}(8)|$. 
  The formulas for simple cyclic covers (see Proposition \ref{SimpleCyclicInvariants}) yield $K^2_X=2$ and $\chi(\mathcal{O}_X)=4$. Moreover, $K_X$ is ample because it is the pullback via $f$ of the ample line bundle $\mathcal{O}_{\mathbb{P}^2}(1)$. Hence, $X\in\mathfrak{M}_{2, 4}$ and 
 $\text{Aut}(X)=\mathbb{Z}_2$ by Corollary \ref{ParticularAutGroups}.
\end{proof}

\begin{remark}
 There are many examples of
 Horikawa surfaces whose group of automorphisms is not isomorphic to $\mathbb{Z}_2$. Indeed, let $(K^2,\chi)$ be an admissible pair such that $K^2=2\chi-6$. Then:
 \begin{enumerate}
  \item[-] Every irreducible component of $\mathfrak{M}_{K^2, \chi}$ contains surfaces whose group of automorphisms has a subgroup isomorphic to $\mathbb{Z}_2^2$ by \cite[Theorem 1]{Lorenzo2021}.
  \item[-] Every irreducible component of $\mathfrak{M}_{K^2, \chi}$ contains surfaces whose group of automorphisms has a subgroup isomorphic to $\mathbb{Z}_3$ by \cite[Theorem 1.1]{Lorenzo2021Z3}.
 \end{enumerate}
\end{remark}

\noindent \begin{acknowledgements}
 The author is deeply indebted to Margarida Mendes Lopes for all her help. The author also thanks Rita Pardini for pointing out some mistakes and showing him how to prove the upper semicontinuity result for families of stable curves. Finally, the author would like to express
his gratitude to the anonymous reviewer for her/his thorough reading of the paper and suggestions.
\end{acknowledgements}

\bibliographystyle{plain}      
\bibliography{GAHS}

\newpage

\noindent Vicente Lorenzo \footnote{The author is a research support technician at Universidad Carlos III de Madrid. The author did the main part of this work as a Ph.D student of the Department of Mathematics and
Center for Mathematical Analysis, Geometry and Dynamical Systems of Instituto Superior T\'{e}cnico,
Universidade de Lisboa and was supported by Fundac\~{a}o para a Ci\^{e}ncia e a Tecnologia (FCT), Portugal through
the program Lisbon Mathematics PhD (LisMath), scholarship  FCT - PD/BD/128421/2017 and
projects UID/MAT/04459/2019 and UIDB/04459/2020.}\\
Telematic Engineering Department\\
Universidad Carlos III de Madrid\\
Avenida de la Universidad 30\\
Leganés (Madrid), Spain\\
\textit{E-mail address: }{vlorenzogarcia@gmail.com}\\
https://orcid.org/0000-0003-2077-6095

\end{document}